\numberwithin{equation}{section}
\newtheorem{thm}{Theorem}[subsection]
\newtheorem{lem}[thm]{Lemma}
\newtheorem{prop}[thm]{Proposition}
\newtheorem{cor}[thm]{Corollary}
\theoremstyle{definition}
\newtheorem{rmk}[thm]{Remark}
\renewcommand{\k}{\mathbf{k}}
\newcommand{\id}{\operatorname{id}}
\newcommand{\ov}{\overline}
\newcommand{\we}{\wedge}
\newcommand{\rk}{\operatorname{rk}}
\newcommand{\XX}{{\mathcal X}}
\newcommand{\VV}{{\mathcal V}}
\renewcommand{\SS}{{\mathcal S}}
\newcommand{\Ext}{\operatorname{Ext}}
\newcommand{\End}{\operatorname{End}}
\renewcommand{\a}{\alpha}
\renewcommand{\b}{\beta}
\newcommand{\la}{\lambda}
\renewcommand{\P}{{\mathbb P}}
\newcommand{\wt}{\widetilde}
\newcommand{\ot}{\otimes}
\newcommand{\sub}{\subset}
\newcommand{\vol}{\operatorname{vol}}
\newcommand{\GL}{\operatorname{GL}}
\newcommand{\Bun}{\operatorname{Bun}}
\renewcommand{\k}{\mathbf{k}}
\renewcommand{\mod}{\operatorname{mod}}
\newcommand{\OO}{{\mathcal O}}
\newcommand{\Si}{\Sigma}
\newcommand{\II}{{\mathcal I}}
\newcommand{\hra}{\hookrightarrow}
\newcommand{\lan}{\langle}
\newcommand{\ran}{\rangle}
\renewcommand{\P}{{\mathbb P}}
\newcommand{\si}{\sigma}
\newcommand{\Pic}{\operatorname{Pic}}
\newcommand{\de}{\delta}
\renewcommand{\ker}{\operatorname{ker}}
\newcommand{\im}{\operatorname{im}}
\newcommand{\tr}{\operatorname{tr}}
\newcommand{\Sec}{\operatorname{Sec}}
\newcommand{\fg}{{\mathfrak g}}
\newcommand{\fm}{{\mathfrak m}}
\title[]{Compatible Feigin-Odesskii Poisson brackets}
\author{Nikita Markarian}
\author{Alexander Polishchuk}
\thanks{A.P. is supported in part by the NSF grant DMS-2001224, 
and within the framework of the HSE University Basic Research Program and by the Russian Academic Excellence Project `5-100'.}
\begin{document}

\begin{abstract}
We prove that several Feigin-Odesskii Poisson brackets associated with normal elliptic curves in $\P^n$ are compatible if and only if 
they are contained in a scroll or in a Veronese surface in $\P^5$ (with an exception of one case when $n=3$). In the case $n=3$ we determine
the quartic corresponding to the Schouten bracket of two (non-compatible) Poisson brackets associated with normal elliptic curves $E_1$ and $E_2$.
\end{abstract}

\maketitle

\section{Introduction}

We work over an algebraically closed field $\k$ of characteristic $0$. 

This work is a contribution to the study of a remarkable class of Poisson brackets on projective
spaces associated with elliptic curves, that were introduced by Feigin and Odesskii (see \cite{FO95}).
Namely, for every normal elliptic curve $E\sub \P^n$, there is an associated Poisson bracket $\Pi_E$
defined up to rescaling (we refer to these as FO brackets; sometimes they are denoted as $q_{n+1,1}(E)$). 
Setting $L=\OO(1)|_E$ one can identify $\P^n$
with the projective space of extensions of $L$ by $\OO$. Then the bracket $\Pi_E$ can be defined and studied in
terms of geometry of such extensions on $E$.

Recall that two Poisson structures $\Pi_1,\Pi_2$ on the same space $X$ are called compatible if every linear combination $\la_1\Pi_1+\la_2\Pi_2$ is still a Poisson
structure. This is equivalent to the identity $[\Pi_1,\Pi_2]=0$, where we use the Schouten bracket of bivectors. More generally, one can consider larger linear subspaces of Poisson bivectors.
In the work \cite{OW} Odesskii and Wolf gave a construction of $9$-dimensional subspaces of Poisson structures on $\P^n$ whose general member is a bracket $\Pi_E$ for some $E\sub \P^n$. 
In \cite{HP-bih} this construction was interpreted geometrically in terms of families of anticanonical divisors on Hirzebruch surfaces, and extended to give new examples of compatible
Poisson structures. In the present work we show that these constructions are the only way to produce compatible FO brackets of type $q_{n+1,1}(E)$, with one exception occurring for $n=3$.

First, let us consider the case of FO brackets associated with normal elliptic curves in $\P^3$.

\medskip

\noindent
{\bf Theorem A}.
{\it For a collection of normal elliptic curves $(E_i)_{i\in I}$ in $\P^3=\P V$, the brackets $(\Pi_{E_i})$ are compatible if and only if either all $E_i$ lie on one quadric surface (possibly singular),
or there exists a $3$-dimensional subspace $W\sub S^2V^*$ such that every $E_i$ is the intersection of two quadrics in $W$.
}

\medskip

Theorem A has a natural generalization to FO brackets in higher-dimensional projective spaces.
Let us fix $n\ge 4$. With every normal elliptic curve $E$ in $\P^n$ we can associate a $1$-dimensional family $\SS_E$ of rational normal scrolls $S(r,r)\sub \P^n$ if $n=2r+1$,
with several exceptional members of type $S(r-1,r+1)$ (resp., $S(r-1,r)$ if $n=2r$), which are parametrized by points of $\Pic_2(E)$, see Sec.\ \ref{scrolls-sec} for details.
In the case of $n=5$, each normal elliptic curve $E$ in $\P^5$ is contained in four Veronese surfaces, corresponding to choices of a square root of the line bundle $\OO(1)|_E$ of degree $6$.

\medskip

\noindent
{\bf Theorem B}.
{\it For a collection of normal elliptic curves $(E_i)_{i\in I}$ in 
$\P^n$, where $n\ge 4$, the brackets $(\Pi_{E_i})$
are compatible if and only if 
\begin{itemize}
\item either the corresponding families $(\SS_{E_i})$ have an element in common,
\item or $n=5$ and all $E_i$ are contained in a Veronese surface $\P^2\sub \P^5$.
\end{itemize}
}

\medskip

The family of elliptic curves contained in a scroll is precisely the family of anticanonical divisors producing compatible Poisson brackets of Odesskii-Wolf 
(see \cite[Sec.\ 5.3]{HP-bih}). 
The fact that elliptic curves contained in a Veronese surface in $\P^5$ give rise to compatible Poisson brackets was observed in \cite[Ex.\ 4.6]{HP-bih}.

Let us call an {\it FO-subspace} any linear subspace of compatible Poisson brackets on $\P^n$, whose generic point is of the form $\Pi_E$ for some normal elliptic curve $E$.
Odesskii-Wolf conjectured that their examples of $9$ compatible Poisson brackets on $\P^n$ give maximal linear subspaces of Poisson brackets.
We prove that this is true if $n$ is even, while for odd $n$ we prove maximality of these subspaces among FO-subspaces.

\medskip

\noindent
{\bf Corollary C}. 
{\it (a) Given a rational normal scroll $S=S(s,r)$ in $\P^n$, with $0<s\le r\le s+2$, consider the $9$-dimensional FO-subspace $V_S$ of Poisson brackets on $\P^n$ coming from anticanonical 
divisors on $S$. Then $V_S$ is a maximal subspace of Poisson brackets provided $n$ is even (resp., a maximal FO-subspace if $n$ is odd). 

\noindent
(b) The maximal dimension of a linear subspace of Poisson brackets on $\P^n$, containing some FO bracket $\Pi_E$, is $9$ provided $n$ is even and $n\ge 4$.
For odd $n$, the maximal dimension of an FO-subspace of Poisson brackets on $\P^n$ 
is $9$, for $n\ge 3$, $n\neq 5$, and is $10$ for $n=5$.}
\medskip

The construction of compatible Poisson brackets in \cite{HP-bih} also produces some families of compatible FO brackets of type $q_{n,k}(E)$ (these are associated with
higher rank stable bundles on elliptic curves). It would be interesting to find criteria for compatibility of brackets of type $q_{n,k}(E)$ similar to our Theorems A and B.

Let us return to FO brackets on $\P^3$.
In the case when $\Pi_{E_1}$ and $\Pi_{E_2}$ are not compatible, the Schouten bracket $[\Pi_{E_1}, \Pi_{E_2}]$
is a section of ${\bigwedge}^3T_{\P^3}\simeq \OO_{\P^3}(4)$, so it gives a quartic hypersurface in $\P^3$. We will compute this hypersurface in terms of defining pairs of quadrics for $E_1$ and $E_2$.
Let $\P^3=\P V$, where $V$ is a $4$-dimensional vector space. Consider a natural linear map
$$\Phi:{\bigwedge}^4(S^2V^*)\to \det(V^*)\ot S^4(V^*)$$
that associates with $Q_1\we Q_2\we Q_3\we Q_4$ the polynomial map of degree $4$ with values in $\det(V^*)$, 
$$v\mapsto (\partial_v Q_1)\we(\partial_v Q_2)\we(\partial_v Q_3)\we (\partial_v Q_4).$$
On the other hand, the Poisson bivector associated with $Q_1=Q_2=0$ canonically is an element of $\det(V^*)\ot H^0(\P V,{\bigwedge}^2 T_{\P V})$,
hence the Schouten bracket of two such bivectors is a section of 
$$(\det V^*)^{\ot 2}\ot {\bigwedge}^3T_{\P^3}\simeq (\det V^*)^{\ot 2}\ot\det(V)\ot \OO_{\P V}(4)\simeq \det(V^*)\ot \OO_{\P V}(4).$$

\medskip

\noindent
{\bf Theorem D}. 
{\it One has an equality in $\det(V^*)\ot S^4(V^*)$,
$$[\Pi_{Q_1=Q_2=0},\Pi_{Q_3=Q_4=0}]=4\cdot \Phi(Q_1\we Q_2\we Q_3\we Q_4).$$ 
}

\medskip

The quartic surface $X_{E_1,E_2}\sub \P^3$ given by $[\Pi_{E_1},\Pi_{E_2}]=0$ can be characterized geometrically as follows.
First, with a normal elliptic curve $E\sub \P^3$ and a point $p\in \P^3$, which is not the vertex of one of the four singular quadrics through $E$, we can associate a line $L_E(p)\sub \P^3$ called the {\it polar} line of $p$ with respect to $E$: if $E$ is given by $Q_1=Q_2=0$, then $L_E(p)$ is the intersection of polar planes to $p$ with respect to $Q_1=0$ and $Q_2=0$ (it does not
depend on a choice of $Q_1,Q_2$). Now $X_{E_1,E_2}$ is the closure of the set of points $p$ such that the polars $L_{E_1}(p)$ and $L_{E_2}(p)$ have a nontrivial
intersection (this quartic also contains vertices of singular quadrics through $E_1$ and $E_2$).
Another definition of the polar line to a point $p$ with respect to $E$ (for a generic $p$) is the following. Take two secant lines to $E$, $L_1=\ov{p_1q_1}$ and $L_2=\ov{p_2q_2}$, passing through $p$ (they coincide with two lines through $p$ on the unique quadric $Q$ passing through $E$ and $p$). On each of these two lines $L_i$, choose a point $r_i$, such that
$(p,p_i,q_i,r_i)$ is a harmonic quadruple (have double ratio $-1$). Then the polar line $L_E(p)$ is the line passing through $r_1$ and $r_2$.



\bigskip

\noindent
{\it Acknowledment}. This work was done while both authors were staying at the IHES. We thank this institution for hospitality and excellent working environment.

\section{Some facts about FO brackets}

\subsection{Conormal Lie algebras}

Recall that for any point $x$ of a smooth Poisson variety $(X,\Pi)$, there is a natural Lie algebra structure on the space
$$\fg_{\Pi,x}:=\ker(\Pi_x:T_x^*X\to T_xX)\sub T_x^*X.$$
Namely, if we lift $a,b\in \fg_x\sub \fm_x/\fm_x^2$ to some local functions $\wt{a},\wt{b}\in\fm_x$, then
$\{\wt{a},\wt{b}\}\in \fm_x$ and
$$[a,b]:=\{\wt{a},\wt{b}\}\mod \fm_x^2.$$

Now let $\Pi=\Pi_E$ denote the FO bracket on $\P^n=\P\Ext^1(L,\OO)$, associated with a normal elliptic curve $E\sub \P^n$ (so $L=\OO(1)|_E$). 
It is known (see \cite[Prop.\ 2.3]{HP-bih}) that the rank of $\Pi_x$ is equal to $n+1-\dim\End(V)$, where $V$ is the extension of $L$ by $\OO$ corresponding to $x$.
The next result realizes this equality by an isomorphism of natural Lie algebras.

\begin{prop}\label{conormal-Lie-prop}
For $x\in \Ext^1(L,\OO)\setminus\{0\}$, which is the class of an extension
\begin{equation}\label{Ext-VL-eq}
0\to \OO\rTo{i} V\rTo{p} L\to 0,
\end{equation}
the conormal Lie algebra $\fg_{\Pi,x}$ for the FO-bracket $\Pi=\Pi_E$ is naturally isomorphic to the Lie algebra $\End(V)/\lan\id\ran$. 
\end{prop}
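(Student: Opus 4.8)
The plan is to compute $\fg_{\Pi,x}=\ker(\Pi_x\colon T_x^*\P^n\to T_x\P^n)$ cohomologically, exhibit a canonical isomorphism of the underlying vector space with $\End(V)/\lan\id\ran$, and only then match the induced Lie brackets. First I would record cohomological models for the (co)tangent spaces. Writing $W=\Ext^1(L,\OO)$ and $x=[\xi]$, and applying the relevant $\Hom$ functors ($\Hom(L,-)$, $\Hom(-,\OO)$ and $\Hom(-,V)$) to the triangle $\OO\xrightarrow{i}V\xrightarrow{p}L\xrightarrow{\xi}\OO[1]$ attached to (\ref{Ext-VL-eq}), one obtains canonical identifications $T_x\P^n\cong W/\k\xi$ and $T_x^*\P^n\cong H^0(V)/\k i$ (up to the harmless one-dimensional twist by $\k\xi$), where $i\in H^0(V)=\Hom(\OO,V)$ is the structural section. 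Here I use Serre duality on $E$ together with the vanishing $\Hom(L,V)=0$, which holds because $\xi\neq 0$.

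Second, I would identify the anchor. The key input, to be extracted from the construction of $\Pi_E$, is that $\Pi_x$ is induced by the Yoneda product with $\xi$, i.e.\ by the connecting map $\mu\colon H^0(V)=\Hom(\OO,V)\to\Ext^1(L,V)$, $s\mapsto s\circ\xi$, which sits in the exact sequence $\Hom(V,V)\xrightarrow{i^*}\Hom(\OO,V)\xrightarrow{\mu}\Ext^1(L,V)$ coming from $\Hom(-,V)$. Exactness gives $\ker\mu=i^*\End(V)$, and since $\Hom(L,V)=0$ the map $i^*\colon\End(V)\to H^0(V)$, $\phi\mapsto\phi\circ i$, is injective with $i^*(\id)=i$; hence $\ker\mu/\k i\cong\End(V)/\lan\id\ran$ canonically. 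As the anchor factors through $\mu$, its kernel contains $\ker\mu/\k i$, and since $\dim\fg_{\Pi,x}=n-\rk\Pi_x=\dim\End(V)-1$ by \cite[Prop.\ 2.3]{HP-bih}, the two spaces coincide. This yields the isomorphism $\fg_{\Pi,x}\cong\End(V)/\lan\id\ran$ of vector spaces, with $\phi\bmod\lan\id\ran$ corresponding to the class of the section $\phi\circ i$.

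Finally, I would check that the conormal bracket on $\fg_{\Pi,x}$ corresponds to the commutator $[\phi,\psi]=\phi\psi-\psi\phi$ on $\End(V)/\lan\id\ran$. This is the main obstacle, since the conormal bracket is a second-order invariant of $\Pi$ and is not determined by the anchor alone: one must lift the conormal vectors attached to $\phi,\psi$ to local functions on $\P^n$ and compute $\{\cdot,\cdot\}$ modulo $\fm_x^2$, which requires an explicit local model of $\Pi_E$ to second order. I would carry this out using the explicit description of $\Pi_E$ on linear functions (elements of $H^0(L)$) and show that the relevant quadratic term, restricted to the directions coming from $\End(V)$, is computed by the composition $\End(V)\otimes\End(V)\to\End(V)$, whose antisymmetrization is the commutator. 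A more conceptual route I would pursue in parallel is to identify the symplectic leaf through $x$, near $x$, with a fibre of the natural (rational) assignment $x\mapsto[V]$ to a moduli of bundles on $E$; then $\fg_{\Pi,x}$ is the infinitesimal stabilizer $\End(V)/\lan\id\ran$ with its commutator bracket, and the heart of the proof is precisely the compatibility of $\Pi$ with this leaf structure to second order.
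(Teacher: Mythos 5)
Your first two paragraphs are essentially the paper's own argument for the vector-space identification, transported from $H^0(L)$ to $H^0(V)$: under the isomorphism $H^0(V)/\k i\simeq\lan x\ran^\perp\sub H^0(L)$ your map $\phi\mapsto\phi\circ i$ becomes the paper's $A\mapsto p\circ A\circ i$, injectivity follows from $\Hom(L,V)=0$ in both versions, and both conclude by the dimension count $\dim\ker(\Pi_x)=\dim\End(V)-1$ coming from the rank formula of \cite[Prop.\ 2.3]{HP-bih}. The one statement you should not leave as ``to be extracted from the construction'' is that the anchor factors through $s\mapsto s\circ\xi$; the paper avoids this by checking directly from the Massey-product formula that the image of $\End(V)/\lan\id\ran$ lands in $\ker(\Pi_x)$ (if $s_1=p\circ A\circ i$ one may take $\wt{s}_1=p\circ A$, so the Massey product $\wt{s}_1(\wt{s}_2)$ is liftable to $H^0(V)$ and hence pairs to zero with $x$). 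That short verification could replace your unproved factorization claim.

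The genuine gap is your third step. Identifying the conormal bracket with the commutator is the actual content of the proposition and occupies almost all of the paper's proof: one extends $s,s'$ to sections of the conormal sheaf along a first-order deformation $x+tv$, computes the $t$-coefficient of $\lan MP(s+tu,x+tv,s'+tu'),x+tv\ran$ via \v{C}ech cocycles on a two-chart cover of $E$, and matches the result with $p(AA'-A'A)i$. Your proposal only announces that this computation ``would'' be carried out and that the relevant quadratic term ``is computed by'' the composition on $\End(V)$; no second-order local model of $\Pi_E$ is written down and no computation is performed, so nothing is actually proved at this step. The alternative ``conceptual route'' you sketch (symplectic leaves as fibres of $x\mapsto[V]$, so that $\fg_{\Pi,x}$ is the infinitesimal stabilizer with its commutator bracket) is precisely the symplectic-groupoid picture of Remark \ref{rmk1}; but knowing the leaf through $x$ only determines $\ker(\Pi_x)$ as a subspace, not the induced bracket on it, and upgrading this to a proof requires showing that $X\times_{\Bun_L}X$ is a symplectic groupoid inducing $\Pi_E$ --- a fact the authors explicitly defer to another paper. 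As written, your proposal establishes the linear isomorphism but not that it is an isomorphism of Lie algebras.
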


\begin{proof}
Recall (see \cite[Lem.\ 2.1]{HP-bih}) that for $x\in \Ext^1(L,\OO)\setminus\{0\}$ and $s_1,s_2\in\lan x\ran^\perp\sub H^0(L)$, the bivector $\Pi_x$ is given by 
$$s_1\we s_2\mapsto \lan MP(s_1,x,s_2),x\ran,$$
where $MP(s_1,x,s_2)\in H^0(L)/\lan s_1,s_2\ran$ is the Massey product.
Note that we can identify $\lan x\ran^\perp\sub H^0(L)$ with the kernel of the connecting homomorphism $H^0(L)\to H^1(\OO)$ associated with \eqref{Ext-VL-eq},
hence, $\lan x\ran^\perp$ consists of sections liftable to $H^0(V)$. 
By definition, $MP(s_1,x,s_2)$ is the $\lan s_1,s_2\ran$-coset represented by $\wt{s}_1(\wt{s}_2)$, where
$\wt{s}_1:V\to L$ is  a map extending $s_1:\OO\to L$ and $\wt{s}_2\in H^0(V)$ is a lifting of $s_2$. 

Let us consider the map
$$\End(V)/\lan\id\ran \to H^0(L): A\mapsto p\circ A\circ i.$$
We claim that this map is injective. Indeed, if $p\circ A\circ i=0$ for $A\in \End(V)$, then $A\circ i=c\cdot i$ for some constant $c$. Hence, $(A-c\id)\circ i=0$, so $A-c\id$ factors through
a map $L\to V$. But any such map is zero since the extension \eqref{Ext-VL-eq} does not split, so $A\in \lan\id\ran$, which proves our claim.

Now suppose $s_1=p\circ A\circ i$, for some $A\in \End(V)$. Then we can take $\wt{s}_1=p\circ A$. Hence, $\wt{s}_1(\wt{s}_2)=p(A(\wt{s}_2))$ which lies in $\lan x\ran^\perp$
since it is liftable to $H^0(V)$. Thus, we have an inclusion
$$\End(V)/\lan\id\ran\hra \ker(\Pi_x)\sub \lan x\ran^\perp.$$
Since the dimensions are equal, in fact, we get an identification of vector spaces
$$\End(V)/\lan\id\ran\rTo{\sim} \fg_{\Pi,x}=\ker(\Pi_x).$$

It remains to check compatibility of the brackets. Let us fix a pair of endomorphisms $A,A'\in\End(V)$, and let $s=p\circ A\circ i$ and $s'=p\circ A'\circ i$ be the corresponding elements of
$\ker(\Pi_x)$. Let us also fix a vector $v\in \Ext^1(L,\OO)\setminus \lan x\ran$, representing a nontrivial tangent direction to $\lan x\ran$ in the projective space.
Let us consider the corresponding line $x(t)=x+tv$ in $\Ext^1(L,\OO)$, and the induced line $\lan x(t)\ran$ in the projective space.
In order to compute $\lan [s,s'],v\ran$ we have to extend $s$ and $s'$ to sections $s(t)$, $s'(t)$ of the cotangent bundle along the line $\lan x(t)\ran$. Then by definition, we have
$$\lan [s,s'],v\ran=\frac{d}{dt} \lan\Pi, s(t)\we s'(t)\ran |_{t=0}.$$
Furthermore, it is enough to work over dual numbers, where $t^2=0$.
Thus, we can pick any elements $u,u'\in H^0(L)$, such that
$$\lan x,u\ran+\lan v,s\ran=0, \ \lan x,u'\ran+\lan v,s'\ran=0,$$
and set $s(t)=s+tu$, $s'(t)=s'+tu'$. Then 
$$\lan x(t),s(t)\ran=\lan x+tv,s+tu\ran=0, \ \ \lan x(t),s'(t)\ran=0,$$
so $s(t)$ and $s'(t)$ are in sections of the cotangent bundle. Now we have to calculate the derivative at $0$ (i.e., the coefficient of $t$)of
$$\lan\Pi, s(t)\we s'(t)\ran=\lan MP(s+tu,x+tv,s'+tu'),x+tv\ran,$$

To calculate this we use Cech representatives. We cover $E$ with two affine opens $U_1$ and $U_2$, set $U_{12}=U_1\cap U_2$, and realize $x$ and $v$ by
Cech $1$-cocycles $x\in L^{-1}(U_{12})$, $v\in L^{-1}(U_{12})$. The extension $V=V_x$ of $L$ by $\OO$ associated with $x$ is equipped with splitting $\si_a:L\to V$ over each $U_a$, 
$a=1,2$, such that over $U_{12}$ we have 
$$(\si_2-\si_1)(\phi)=i(x\cdot \phi),$$
where $\phi\in L(U_{12})$. 

If we have a section $s\in \lan x\ran^\perp\sub H^0(L)$, then there exist $f_a\in \OO(U_a)$, $a=1,2$, such that
\begin{equation}\label{x-s-fa-relation}
x\cdot s=f_1-f_2.
\end{equation}
Then we can extend $s$ to a section $\a=\a(s,f_\bullet):\OO\to V$ and to a morphism $\b=\b(s,f_\bullet):V\to L$ such that
$p(\a)=s=\b\circ i$, and
$$\a|_{U_a}=i(f_a)+\si_a(s), \ \ \b|_{U_a}(\si_a(\phi))=-f_a\cdot \phi.$$

It is easy to see that an operator $A\in \End(V)$ with $\tr(A)=0$, is determined by the section $s=p\circ A\circ i$, together with
functions $f_a\in \OO(U_a)$ and sections $A_a\in L^{-1}(U_a)$, $a=1,2$, satisfying \eqref{x-s-fa-relation} and
\begin{equation}\label{A2-A1-f-x-eq}
A_2-A_1=(f_1+f_2)\cdot x.
\end{equation}
Namely, the operator $A:V\to V$ is given over $U_i$ by
$$A i(1)=f_a+\si_a(s), \ \ A \si_a(\phi)=i(A_a\cdot\phi)-f_a\cdot \si_a(\phi),$$
for $\phi\in L(U_a)$.

Next, we need to consider the extension $V_{x+tv}$ of $L$ by $\OO$ corresponding to $x+tv$. We denote by $\si_a(t):L\to V_{x+tv}$ the splittings over $U_a$,
$a=1,2$, such that
$$(\si_2(t)-\si_1(t))(\phi)=i((x+tv)\cdot \phi).$$
We start with two traceless operators $A,A'\in \End_0(V)$ and consider the corresponding sections $s,s'\in \lan x\ran^\perp\sub H^0(L)$. Thus, we can assume that we have
the corresponding functions $f_a,f'_a\in \OO(U_a)$ and sections $A_a,A'_a\in L^{-1}(U_a)$ satisfying \eqref{x-s-fa-relation}, \eqref{A2-A1-f-x-eq} and the similar relations
involving $s'$, $f'_a$ and $A'_a$.
We also choose functions $g_a,g'_a\in \OO(U_a)$ such that
\begin{equation}\label{vs-xu-g-relation}
v\cdot s+x\cdot u=g_1-g_2, \ \ v\cdot s'+x\cdot u'=g'_1-g'_2.
\end{equation}
Then we have the following deformed version of \eqref{x-s-fa-relation} over $U_{12}$:
$$(x+tv)\cdot (s+tu)=(f_1+g_1t)-(f_2+g_2t).$$

By definition, the Massey product $MP(s+tu,x+tv,s'+tu')$ is the class of the composition $\b\circ \a'$, where $\a':\OO\to V_{x+tv}$ is the global section projecting to $s'+tu'$,
while $b:V_{x+tv}\to L$ is the morphism extending $s+tu$. As we have seen before, we can define $\a'$ and $\b$ so that over $U_a$ we have
$$a'=i(f'_a+tg'_a)+\si_a(s'+tu'),$$
$$\b(i(f)+\si_a(\phi))=f(s+tu)-(f_a+tg_a)\phi.$$
Hence, 
\begin{align*}
&MP=MP(s+tu,x+tv,s'+tu')=(s+tu)(f'_a+tg'_a)-(s'+tu')(f_a+tg_a)=\\
&sf'_a-s'f_a+t(sg'_a+uf'_a-s'g_a-u'f_a),
\end{align*}
and therefore, considering the coefficient of $t$ in $\lan MP,x+tv\ran$ we get 
$$[s,s']=v\cdot(sf'_a-s'f_a)+x\cdot(sg'_a+uf'_a-s'g_a-u'f_a) \mod \im(\de),$$
where $a=1$ or $2$, and we view the right-hand side as a Cech cohomology class $H^1(\OO)\simeq \k$
(the answers for $a=1$ and $a=2$ differ by a cocycle).
Therefore, we have
$$2[s,s']=v\cdot[s(f'_1+f'_2)-s'(f_1+f_2)]+C \mod \im(\de),$$
where
$$C=x\cdot [s(g'_1+g'_2)+u(f'_1+f'_2)-s'(g_1+g_2)-u'(f_1+f_2)]\mod \im(\de).$$
Note that $u$ and $u'$ are global sections, while $x(f_1+f_2)$ and $x(f'_1+f'_2)$ are coboundaries due to the relation \eqref{A2-A1-f-x-eq}.
Hence, 
$$C=xs(g'_1+g'_2)-xs'(g_1+g_2)=(f_1-f_2)(g'_1+g'_2)-(f'_1-f'_2)(g_1+g_2)\equiv f_1g'_2-f_2g'_1-f'_1g_2+f'_2g_1 \mod \im(\de),$$
where we removed the coboundary terms $f_1g'_1$, etc.

On the other hand, multiplying the first of the relations \eqref{vs-xu-g-relation} with $f'_1+f'_2$, we get
$$f'_2g_1-f'_1g_2\equiv (f'_1+f'_2)(g_1-g_2)=(f'_1+f'_2)vs+(f'_1+f'_2)xu\equiv (f'_1+f'_2)vs \mod \im(\de).$$
Similarly,
$$f_2g'_1-f_1g'_2\equiv (f_1+f_2)vs' \mod \im(\de).$$
Hence, we obtain
$$C\equiv v\cdot[s(f'_1+f'_2)-s'(f_1+f_2)],$$
which gives
$$\lan [s,s'],v\ran=v\cdot[s(f'_1+f'_2)-s'(f_1+f_2)] \mod \im(\de).$$
Finally, using \eqref{x-s-fa-relation}, we get
$$(f_1-f_2)\cdot s'=x\cdot s\cdot s'=(f'_1-f'_2)\cdot s,$$
or equivalently,
$$sf'_1-s'f_1=sf'_2-s'f_2.$$
Hence, we can rewrite the above formula as
$$\lan [s,s'],v\ran=2 v\cdot(sf'_1-s'f_1).$$

It remains to compare this with $p (AA'-A'A)i(1)$. We can compute this over $U_1$:
$$p(AA'-A'A)i(1)=pA[i(f'_1)+\si_1(s')]-pA'[i(f_1)+\si_1(s)]=2(f'_1s-f_1s').$$
Comparing this with the formula above, we get
$$\lan p(AA'-A'A)i(1),v\ran=\lan [s,s'],v\ran$$
which proves our assertion.
\end{proof}

\begin{rmk}
Proposition \ref{conormal-Lie-prop} is a part of a broader picture, involving the notion of a symplectic groupoid (see \cite{Wein}). Given
a symplectic groupoid $M$ with the space of objects $X$ (in the category of smooth schemes), then $X$ has a natural Poisson structure. In this situation 
one can also consider the algebraic stack $\XX$ associated with the groupoid $M$. Then the Lie algebra of automorphisms in $M$ of an object $x\in X$ 
(or equivalently, of the corresponding point of $\XX$) is naturally identified with the coisotropic Lie algebra at $x\in X$ of the Poisson structure on $X$ (see \cite[ch.\ 2, Lem.\ 1.2]{KarMas}).

In the case of the Feigin--Odesskii Poisson structure on $X=\P\Ext^1(L,\OO)$, the symplectic groupoid $M$ can be constructed as follows. We have a 
natural map $f:X\to \Bun_L$, where $\Bun_L$ is the stack of rank $2$ bundles on $E$ with
the determinant $L$, and $f$ associates the bundle $V$ with an extension
\eqref{Ext-VL-eq}. Then we define $M$ to be the $2$-fibered product 
$$M:=X\times_{\Bun_L} X.$$ 
The first part of the proof of Prop.\ \ref{conormal-Lie-prop} shows that the map $f$
induces a surjection on tangent spaces, hence, $M$ is a smooth algebraic space. From the fibered product structure we get a groupoid structure on $M$.
The corresponding stack $\XX$ is equipped with a fully faithful map
$\XX\to \Bun_L$ (see \cite[Lem.\ 93.16.1]{stacks}, so automorphisms of points are the same.
One can show that in fact $M$ has a structure of a symplectic groupoid that induces the Feigin--Odesskii Poisson structure on $X$ (the details will appear elsewhere). 
This gives a more conceptual proof of the identification of Lie algebras in Prop. \ref{conormal-Lie-prop}.
\label{rmk1}
\end{rmk}

It is well known that the FO-bracket $\Pi_E$ vanishes at any $p\in E$. Hence, in this case we get a Lie algebra structure on $T_p^*\P^n$.

\begin{cor}\label{conormal-Lie-cor}
For $p\in E\sub \P^n$, the conormal Lie algebra $\fg=T_p^*\P^n$ admits a basis $x_1,\ldots,x_{n-1},y$, such that 
$\lan x_1,\ldots,x_{n-1}\ran=(T_xE)^\perp$ is an abelian subalgebra, and $[y,x_i]=x_i$.
\end{cor}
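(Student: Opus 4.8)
The plan is to deduce the corollary from Proposition~\ref{conormal-Lie-prop} by identifying the rank-$2$ bundle $V$ attached to a point $p\in E$ and computing its endomorphism algebra. Since $\Pi_E$ vanishes along $E$, for $p\in E$ the conormal Lie algebra is the full cotangent space $\fg=T_p^*\P^n$, so it suffices to understand $\End(V)/\lan\id\ran$.

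First I would identify the extension. Under $\P^n=\P\Ext^1(L,\OO)=\P(H^0(L)^*)$ the point $p$ corresponds to the functional $\operatorname{ev}_p$, and I claim the associated bundle is the decomposable bundle $V\cong\OO(p)\oplus L(-p)$. To verify this I would exhibit $\OO\hra V$ as the subbundle $(a,b)$, where $a\in H^0(\OO(p))$ is the section vanishing at $p$ and $b\in H^0(L(-p))$ is chosen with $b(p)\neq 0$; then $(a,b)$ is nowhere vanishing, so $V/\OO\cong\det V=L$, realizing $V$ as an extension of $L$ by $\OO$. Computing the projection $\pi\colon V\to L$ as $\pi(u,w)=bu-aw$, the image of $H^0(V)\to H^0(L)$ is exactly $H^0(L(-p))$, which shows the extension class equals $\operatorname{ev}_p$ up to scale.

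Next I would compute $\End(V)$. For $V=\OO(p)\oplus L(-p)$ one has $\Hom(L(-p),\OO(p))=H^0(L^{-1}(2p))=0$ (negative degree), while $N:=\Hom(\OO(p),L(-p))=H^0(L(-2p))$ has dimension $n-1$. Hence $\End(V)=\k e_1\oplus\k e_2\oplus N$, where $e_1,e_2$ are the two projections and $N$ is a square-zero ideal. A direct bracket computation gives $[e_1,e_2]=0$, $[e_2,\phi]=\phi$ and $[\phi,\psi]=0$ for $\phi,\psi\in N$. Passing to $\End(V)/\lan\id\ran$, setting $y$ to be the class of $e_2$ and $x_1,\dots,x_{n-1}$ a basis of $N$, Proposition~\ref{conormal-Lie-prop} then yields a Lie algebra with $\lan x_1,\dots,x_{n-1}\ran$ abelian and $[y,x_i]=x_i$, which is the asserted structure.

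The hard part will be matching the abstract abelian subalgebra $N$ with the geometric conormal space $(T_pE)^\perp$. Here I would use that the isomorphism of Proposition~\ref{conormal-Lie-prop} is $A\mapsto\pi\circ A\circ i$, and compute it explicitly: for $\phi\in N$ one gets $\pi\circ A\circ i=-a\cdot\phi(a)$, which vanishes to order $\ge 2$ at $p$ and hence lies in $H^0(L(-2p))=(T_pE)^\perp\sub H^0(L(-p))=T_p^*\P^n$. Since $N$ and $(T_pE)^\perp$ both have dimension $n-1$ and the map is injective, the image is all of $(T_pE)^\perp$; moreover $e_2$ maps to $-ab\notin H^0(L(-2p))$, so $y$ is transverse to $(T_pE)^\perp$ and $\{x_1,\dots,x_{n-1},y\}$ is a basis. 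This identification of the conormal space through orders of vanishing at $p$ is the only delicate point; the remaining steps are routine bookkeeping.
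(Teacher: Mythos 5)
Your proof is correct and follows essentially the same route as the paper: identify $V\simeq\OO(p)\oplus L(-p)$, decompose $\End(V)/\lan\id\ran$ into the square-zero abelian ideal $\Hom(\OO(p),L(-p))=H^0(L(-2p))$ plus the idempotent of the summand $L(-p)$, and compute $[e,\phi]=\phi$. The only difference is that you spell out the identification of the abelian part with $(T_pE)^\perp$ via order of vanishing at $p$ under the map $A\mapsto \pi\circ A\circ i$, a point the paper's proof leaves implicit; this is a welcome addition, not a divergence.
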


\begin{proof}
In this case $V=\OO(p)\oplus L(-p)$ and the algebra $\End(V)/\lan\id\ran$ has an abelian subalgebra $A$ corresponding to endomorphisms of
the form $V\to \OO(p)\to L(-p)\to V$. The $1$-dimensional complement to $A$ is spanned by the idempotent $e$ corresponding to the summand $L(-p)$, and for any $a\in A$
we have $ea=a$, $ae=0$, so $[e,a]=a$.
\end{proof}

Using this description of the conormal Lie algebras, we get the following constraints on Poisson brackets compatible with $\Pi_E$.

\begin{prop}\label{bracket-E-prop} 
Let $E\sub \P^n$ be a normal elliptic curve, and let $\Pi$ be a Poisson bracket on $\P^n$. Assume that  $[\Pi_{E},\Pi]=0$.
Then for every $p\in E$, one has $\Pi|_p=v_1\we v_2$, where $v_1\in T_pE$.
\end{prop}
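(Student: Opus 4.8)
The plan is to exploit the compatibility condition $[\Pi_E,\Pi]=0$ at a point $p\in E$ where $\Pi_E$ vanishes, via the general principle that if two Poisson structures are compatible and one of them vanishes at $p$, then the linearization of the vanishing one constrains the nonvanishing one. Concretely, recall that the conormal Lie algebra $\fg_{\Pi_E,p}$ equals all of $T_p^*\P^n$ (since $\Pi_E|_p=0$), and by Corollary \ref{conormal-Lie-cor} it carries the bracket with basis $x_1,\dots,x_{n-1},y$ where the $x_i$ span $(T_pE)^\perp$ and $[y,x_i]=x_i$. The first step is to translate the Schouten-bracket identity $[\Pi_E,\Pi]=0$ into a compatibility statement relating $\Pi|_p$ to this Lie algebra structure.

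The key mechanism I expect to use is the following: for a bivector $\Pi$ with $[\Pi_E,\Pi]=0$, evaluating the Schouten bracket at a zero $p$ of $\Pi_E$ produces a constraint saying that $\Pi|_p\in\bigwedge^2 T_p\P^n$, viewed as a skew form on $T_p^*\P^n=\fg_{\Pi_E,p}$, must be a Lie-algebra $2$-cocycle (or respect the bracket in an appropriate sense). More precisely, writing $\Pi_E$ near $p$ as its linearization, the degree-zero part of $[\Pi_E,\Pi]$ at $p$ involves the map $(a,b)\mapsto \Pi|_p([a,b],\cdot)$ together with the derivations by which $\fg_{\Pi_E,p}$ acts, and its vanishing forces $\Pi|_p$ to be invariant/cocyclic with respect to the bracket of Corollary \ref{conormal-Lie-cor}. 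So I would first set up this cocycle condition carefully, then solve it.

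Solving the cocycle condition is where the explicit structure of the Lie algebra does the work. Writing $\Pi|_p=\sum c_{ij}\,x_i^*\we x_j^* + \sum d_i\, y^*\we x_i^*$ in the dual basis (i.e. as an element of $\bigwedge^2 T_p\P^n$ paired against $\fg$), the invariance under the adjoint action of $y$, which acts as the identity on the $x_i$ and kills $y$, should force all the ``$x_i\we x_j$'' components to vanish because they sit in a weight-$2$ eigenspace for $\ad y$ while the cocycle condition only allows weight-$0$ and weight-$1$ pieces to survive. What remains is a combination of terms of the form $y^*\we x_i^*$, i.e. a bivector of the shape $v_1\we v_2$ with $v_1$ dual to the $y$-direction, which is exactly $T_pE$ (the line complementary to $(T_pE)^\perp$). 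This yields $\Pi|_p=v_1\we v_2$ with $v_1\in T_pE$, as claimed.

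The main obstacle I anticipate is making the first translation rigorous: extracting from the Schouten-bracket identity precisely the right algebraic condition on $\Pi|_p$ in terms of the conormal Lie bracket. The Schouten bracket mixes $\Pi|_p$ with the first-order jet of $\Pi_E$ at $p$, and one must verify that the relevant obstruction is governed solely by the Lie algebra structure $\fg_{\Pi_E,p}$ and not by higher-order data of $\Pi_E$; this is the content that connects the groupoid/coisotropic picture of Remark \ref{rmk1} to the concrete statement. Once that linearized compatibility is correctly formulated, the representation-theoretic decomposition under $\ad y$ finishes the argument almost immediately, so I would devote most of the care to that first step and treat the eigenspace bookkeeping as routine.
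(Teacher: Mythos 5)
Your proposal is correct and follows essentially the same route as the paper: since $\Pi_E|_p=0$, one has $[\Pi_{E},\Pi]|_p=[\Pi_{E}^{lin},\Pi]|_p$ depending only on $\Pi|_p$, and your $2$-cocycle condition on $\fg_{\Pi_E,p}$ is precisely the paper's direct computation $[\Pi_{E}^{lin},\partial_{x_i}\we\partial_{x_j}]=\pm 2\,\partial_{x_i}\we\partial_{x_j}\we\partial_y$ in the basis of Corollary \ref{conormal-Lie-cor}, which kills the $x_i^*\we x_j^*$ components and leaves $\Pi|_p=v\we\partial_y$ with $\partial_y$ spanning $T_pE$. Your $\ad y$-weight bookkeeping is just a repackaging of that same elimination, so the two arguments coincide in substance.
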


\begin{proof} 
We will calculate $[\Pi_E,\Pi]|_p$ for $p\in E$, and then equate it to zero. We have $\Pi_E|_p=0$, and 
the linear part of $\Pi_{E}$ is 
$$\Pi_{E}^{lin}:=\sum_i x_i\partial_{x_i}\we\partial_y,$$ 
where $x_1,\ldots,x_{n-1},y$ is a basis of $T_p\P^n$ chosen as in Corollary \ref{conormal-Lie-cor}.
The bracket $[\Pi_E,\Pi]|_p=[\Pi_E^{lin},\Pi]|_p$ depends only on $\Pi|_p$. We have 
$$[\Pi_{E}^{lin},\partial_{x_i}\we\partial_y]=0,$$
$$[\Pi_{E}^{lin},\partial_{x_i}\we\partial_{x_j}]=\pm 2\partial_{x_i}\we\partial_{x_j}\we\partial_y.$$
Hence, $[\Pi_{E},\Pi]|_p=0$ if and only if $\Pi|_p=v\we \partial_y$, for some $v\in T_p\P^n$. 
Since $\lan\partial_y\ran=T_pE$, our assertion follows.
\end{proof}

\subsection{Secant variety of $E$ and its fibration in scrolls}\label{scrolls-sec}

Assume $n\ge 4$, and let $E\sub \P^n$ be a normal elliptic curve, $L=\OO(1)|_E$. Consider the $\P^1$-bundle $p:X\to E^{[2]}$ over the symmetric square of $E$, 
with the fiber 
$\P H^0(L|_D)^*\sub \P H^0(E,L)^*=\P^n$ over a point $D\in E^{[2]}$. We have $X=\P(\VV)$, for the following vector bundle $\VV$ on $E^{[2]}$.
Consider the natural map $\pi:E\times E\to E^{[2]}$, then $\VV=(\pi_*p_1^*L)^\vee$, where $p_1:E^2\to E$ is the projection to the first factor.

We have a well defined morphism $\phi:X\to \P^n$, embedding linearly each line $p^{-1}(D)\sub X$ into
$\P^n$, so that its image is precisely the corresponding secant line corresponding to $D$. More precisely, the natural morphism $H^0(L)\ot \OO_{E^2}\to p_1^*L$
induces a surjection $H^0(L)\to \pi_*p_1^*L=\VV^{\vee}$. Hence, we get a closed embedding $X=\P(\VV)\hra E^{[2]}\times \P H^0(L)^*$, and $\phi$ is obtained
by the second projection.

\begin{lem} Assume that $n\ge 4$.
The map $\phi:X\to \P^n$ induces a surjective birational morphism $X\to \Sec^2(E)$ which is an isomorphism over $\Sec^2(E)\setminus E$.
\end{lem}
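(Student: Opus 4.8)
We need to show that $\phi : X = \P(\VV) \to \P^n$, built from the secant-line construction over $E^{[2]}$, factors through a surjective birational morphism onto $\Sec^2(E)$ that is an isomorphism away from $E$. Let me think carefully about the three assertions: (i) the image is $\Sec^2(E)$, (ii) $\phi$ is birational onto its image, and (iii) $\phi$ restricts to an isomorphism over $\Sec^2(E)\setminus E$.

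**Image identification.**
The image is the set-theoretic union of the secant lines, which is by definition $\Sec^2(E)$; so surjectivity onto $\Sec^2(E)$ is essentially built into the construction — each fiber $p^{-1}(D)$ maps to the secant line spanned by the length-$2$ divisor $D$. The content is in proving the fibers of $\phi$ are generically (in fact, away from $E$) singletons.

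**Birationality and the isomorphism over $\Sec^2(E)\setminus E$.**
The key geometric input I would use is: a general point $q$ of $\Sec^2(E)$ lies on a *unique* secant line of $E$. Equivalently, two distinct secant lines meet only at points of $E$ (on $E$, infinitely many secant/tangent lines can pass through a point). This is where the hypothesis $n \ge 4$ enters: for a normal elliptic curve of degree $n+1 \ge 5$ in $\P^n$ with $n\ge 4$, the secant variety is a threefold and a general point has a single secant through it, so $\phi$ is birational. I would prove the stronger pointwise statement directly: if $q \notin E$, then $q$ lies on a unique secant line. Suppose $q$ lies on two distinct secants, spanned by divisors $D_1, D_2 \in E^{[2]}$. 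Then $q$ lies in the span of the length-$4$ scheme $D_1 + D_2$ (assume first $D_1, D_2$ disjoint, so this is four distinct points). By the linear normality / projective normality of $E$ (the defining property of a *normal* elliptic curve, $E \hookrightarrow \P^n = \P H^0(L)^*$ with $\deg L = n+1$), any four points impose independent conditions as long as $h^0(L(-D_1-D_2)) = h^0(L) - 4 = n - 3 \ge 1$, which holds precisely when $n \ge 4$; hence $D_1 + D_2$ spans a $\P^3$ linearly embedded in $\P^n$, and within that $\P^3$ the two secant lines $\overline{D_1}$ and $\overline{D_2}$ are skew (they span the whole $\P^3$), so they cannot meet — contradiction. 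The degenerate cases ($D_1, D_2$ sharing a point, or involving tangent lines where $D_i$ is non-reduced) force the common point to lie on $E$. This establishes injectivity of $\phi$ over $\Sec^2(E) \setminus E$.

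**From bijective to isomorphism, and the main obstacle.**
Bijectivity plus birationality over a normal target would give an isomorphism by Zariski's main theorem, but I cannot assume $\Sec^2(E)$ is normal a priori. The clean route I would take is to exhibit a set-theoretic inverse and show it is a morphism: for $q \in \Sec^2(E)\setminus E$, send $q$ to the unique $D \in E^{[2]}$ with $q \in \overline{D}$, together with the point $q$ itself in $\P^{-1}(D)$. To see this is algebraic and yields $\phi^{-1}$ as a morphism, I would work with the universal incidence: the map $X \hookrightarrow E^{[2]} \times \P^n$ is a closed embedding, so $X$ is the graph of the assignment $q \mapsto D(q)$ over the locus where $D(q)$ is unique; uniqueness over $\Sec^2(E)\setminus E$ means $\phi$ is injective with reduced fibers there, and since $X$ is smooth and $\phi$ is proper and injective over this locus, $\phi$ is a closed immersion onto its image over $\Sec^2(E)\setminus E$ (an injective proper morphism that is unramified is a closed immersion; unramifiedness follows because the differential of $\phi$ is injective on each secant line and the $E^{[2]}$-directions deform the secant, giving no extra kernel at points off $E$). \emph{The main obstacle} is precisely controlling the differential of $\phi$ — showing $\phi$ is unramified (equivalently, an immersion) over $\Sec^2(E)\setminus E$ rather than merely bijective — because this is what upgrades a bijection to an isomorphism without invoking normality of the target. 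Concretely, I expect to check that $d\phi_x$ is injective for $x \in X$ with $\phi(x) \notin E$ by splitting the tangent space of $X$ into the fiber direction (where $\phi$ is a linear embedding of $\P^1$, hence an immersion) and the $E^{[2]}$-base directions, and verifying that a nonzero base tangent vector genuinely moves the secant line, which again reduces to the independence-of-conditions computation governed by $h^0(L(-D)) = n - 1 \ge 3$ for $n \ge 4$.
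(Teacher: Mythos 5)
Your argument is correct and is essentially an unpacking of the paper's one-line proof, which simply cites the fact that $L$ ``separates $4$ points'' (any length-$4$ subscheme of $E$ imposes independent conditions on $|L|$ since $\deg L=n+1\ge 5$) together with Terracini's Lemma as in Bertram: separating reduced length-$4$ schemes gives your skew-secants/injectivity step, and separating non-reduced ones gives the unramifiedness. The only slip is in your final formula for the differential computation, where the governing condition is that the length-$4$ scheme $2D$ imposes independent conditions, i.e.\ $h^0(L(-2D))=n-3$ with $\deg L(-2D)=n-3\ge 1$, rather than $h^0(L(-D))=n-1$; both inequalities are equivalent to $n\ge 4$, so the conclusion is unaffected.
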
 

\begin{proof} This is a well known consequence of the fact that $L$ ``separates $4$ points on $E$" since $\deg(L)\ge 5$ (see Terracini's Lemma as stated in \cite[Lem.\ 1.2, Lem.\ 1.4]{Bertram}).
\end{proof}

Next, we consider the Abel-Jacobi map $a:E^{[2]}\to \Pic_2(E): D\mapsto \OO(D)$. For each $M\in \Pic_2(E)$, we set $S_M:=\pi^{-1}(a^{-1}(M))\sub X$.
Note that $\phi(S_M)\sub\P^n$ is the union of secant lines $\ov{pp'}$ over all pairs $p,p'\in E$ such that $\OO_E(p+p')\simeq M$.

We refer to \cite{EH} or \cite{Reid} for background on rational normal scrolls.

\begin{lem} Assume that $n\ge 4$. Then the restriction $\phi|_{S_M}:S_M\to \P^n$ is an embedding. If $n=2r$, then $\phi(S_M)$ is the scroll of type $S(r-1,r)$.
If $n=2r+1$, then $\phi(S_M)$ is the scroll of type $S(r,r)$ for $M$ such that $M^{r+1}\not\simeq L$, and it is of type $S(r-1,r+1)$ for $M$ such that $M^{r+1}\simeq L$.
\end{lem}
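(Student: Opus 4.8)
The plan is to reduce the whole statement to computing the splitting type of a single rank-$2$ bundle on $\P^1$. First I would identify the base of the ruled surface $S_M$: the fiber $B:=a^{-1}(M)\subset E^{[2]}$ is exactly the linear system $|M|\simeq\P^1$, and its preimage $\pi^{-1}(B)\subset E\times E$ consists of the pairs $(x,x')$ with $\OO_E(x+x')\simeq M$. Projecting to the first factor identifies $\pi^{-1}(B)$ with $E$, under which $\pi|_{\pi^{-1}(B)}$ becomes the degree-$2$ map $\sigma\colon E\to B=|M|\simeq\P^1$ attached to the pencil $|M|$ (so that $\sigma^*\OO_{\P^1}(1)\simeq M$), branched at the four points $x$ with $\OO_E(2x)\simeq M$. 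Since $\pi$ is finite and $p_1^*L$ is a line bundle, cohomology and base change give $\VV^\vee|_B=(\pi_*p_1^*L)|_B\simeq \sigma_*L$, and hence $S_M=\P(\VV|_B)$ is the projective bundle of the rank-$2$ bundle $(\sigma_*L)^\vee$ on $\P^1$.

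Next I would read off the morphism $\phi|_{S_M}$. By construction $\phi$ is induced by the surjection $H^0(L)\ot\OO\twoheadrightarrow \VV^\vee$; restricting to $B$ this is the evaluation map $H^0(L)\ot\OO_B\to \sigma_*L$. As $\sigma$ is finite we have $H^0(\P^1,\sigma_*L)=H^0(E,L)$, so after passing to the Grothendieck convention $S_M=\Proj\Sym(\sigma_*L)$ the map becomes precisely the morphism to $\P H^0(\sigma_*L)^*=\P^n$ defined by the complete tautological system $|\OO_{\P(\sigma_*L)}(1)|$. Writing $\sigma_*L\simeq\OO(c)\oplus\OO(d)$ with $0\le c\le d$, its image is then the rational normal scroll $S(c,d)$, and $\phi|_{S_M}$ is a closed embedding onto it as soon as $\OO_{\P(\sigma_*L)}(1)$ is very ample, i.e. as soon as $c\ge 1$ (for $c=0$ the directrix $\P(\OO(d))$ would be contracted to the vertex of a cone). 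I will check below that $n\ge 4$ forces $c\ge 1$ in every case, which is exactly where the hypothesis is used.

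It remains to determine $c,d$. From $\chi(E,L)=n+1$ and additivity of Euler characteristics one gets $c+d=n-1$, so only the difference is at issue. I would compute the full profile $k\mapsto h^0(\P^1,(\sigma_*L)(-k))$, which by the projection formula and $\sigma^*\OO(1)\simeq M$ equals $h^0(E,L\ot M^{-k})$, and evaluate the latter by Riemann--Roch on $E$, where $\deg(L\ot M^{-k})=(n+1)-2k$. For $n=2r$ this degree is always odd, so $L\ot M^{-k}$ is never trivial and the profile is $\max(2r+1-2k,0)$, matching the profile of $\OO(r-1)\oplus\OO(r)$; this gives $S(r-1,r)$ with $c=r-1\ge 1$. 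For $n=2r+1$ the only delicate value is $k=r+1$, where $\deg(L\ot M^{-(r+1)})=0$, and $L\ot M^{-(r+1)}$ is trivial precisely when $M^{r+1}\simeq L$. If $M^{r+1}\not\simeq L$ then $h^0=0$ there and the profile matches $\OO(r)\oplus\OO(r)$, giving $S(r,r)$ with $c=r\ge 1$; if $M^{r+1}\simeq L$ then $h^0=1$ there and the profile matches $\OO(r-1)\oplus\OO(r+1)$, giving $S(r-1,r+1)$ with $c=r-1\ge 1$. Since the splitting type of a rank-$2$ bundle on $\P^1$ is determined by the $h^0$-profile of its twists, this pins down $c,d$. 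The main obstacle is the base-change and ramification bookkeeping of the first step: correctly identifying $\VV^\vee|_B$ with $\sigma_*L$ across the diagonal of $E\times E$ and confirming that the restricted evaluation is exactly the complete tautological system. Once that is in place, the scroll types and the embedding assertion fall out of the elementary Riemann--Roch profile computation.
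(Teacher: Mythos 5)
Your argument is correct and follows essentially the same route as the paper: restrict $\VV$ to the fiber $a^{-1}(M)$ via base change, identify the result with the pushforward of $L$ under the degree-$2$ map $E\to\P^1$ given by $|M|$, and determine the splitting type $\OO(c)\oplus\OO(d)$ from the Euler characteristic together with the (non)vanishing of $H^0(L\ot M^{-k})$ computed by Riemann--Roch. The only difference is cosmetic — you compute the full $h^0$-profile and spell out the very-ampleness criterion $c\ge 1$, where the paper cites the standard fact directly.
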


\begin{proof} Let us compute the restriction of the bundle $\VV$ to the projective line $a^{-1}(M)\sub E^{[2]}$. By the base change formula,
$$\VV|_{a^{-1}(M)}\simeq \pi_{M*}(p_1^*L|_{E_M}),$$
where $E_M\sub E^2$ is the preimage of $a^{-1}(M)$ under $\pi:E^2\to E^{[2]}$, $\pi_M:E_M\to a^{-1}(M)$ is the projection. 
Thus, $E_M=\{(x,y)\in E^2 \ | \ \OO(x+y)\simeq M\}$.
Note that the projection $p_1:E_M\to E$ is an isomorphism, and the composition $\pi_M(p_1|_{E_M})^{-1}:E\to a^{-1}(M)$ is the double cover that can be identified with
the map $f:E\to \P^1$ given by the linear system $|M|$. We want to compute the splitting type of $f_*L$.

Assume first that $n=2r$, so $\deg(L)=2r+1$. We claim that in this case $f_*L\simeq \OO(r-1)\oplus \OO(r)$. Indeed, this follows from the fact that
$\deg(f_*L)=\deg(L)-2=2r-1$ and from the vanishing $H^0(f_*L(-r-1))=H^0(L\ot M^{-r-1})=0$ (since $\deg(L\ot M^{-r-1})=-1$).

Now assume that $n=2r+1$, so $\deg(L)=2r+2$. 
If $L\not\simeq M^{r+1}$ then $H^0(f_*L(-r-1))=H^0(L\ot M^{-r-1})=0$, so $f_*L\simeq \OO(r)\oplus \OO(r)$. On the other hand, if $L\simeq M^{r+1}$,
then $H^0(f_*(-r-1))$ is $1$-dimensional, so $f_*L\simeq \OO(r-1)\oplus \OO(r+1)$.

This proves that $S_M$ is isomorphic to the projectivization of the rank 2 vector bundle over $a^{-1}(M)\simeq \P^1$ of one of the types $\OO(n_1)\oplus \OO(n_2)$ described above.
The fact that the corresponding morphism $S_M\to \P^n$ is an embedding is the standard fact (using that $n_1>0$ and $n_2>0$, which is the case since $n\ge 4$).
\end{proof}

\begin{cor} Assume that $n\ge 4$. There is a well defined morphism $\Sec^2(E)\setminus E\to \Pic_2(E)\simeq E$, whose fibers are of the form $\phi(S_M)\setminus E$, where $\phi(S_M)$
is the scroll of type $S(r-1,r)$ for $n=2r$, or of type $S(r,r)$ or $S(r-1,r+1)$ for $n=2r+1$.
\end{cor}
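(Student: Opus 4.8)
The plan is to produce the morphism by transporting the tautological map $X\to \Pic_2(E)$ across the isomorphism furnished by the first lemma of this subsection, rather than by constructing it directly on the (possibly singular) variety $\Sec^2(E)$. Concretely, the composite $a\circ p:X\to \Pic_2(E)$ of the $\P^1$-bundle projection $p:X\to E^{[2]}$ with the Abel--Jacobi map $a:E^{[2]}\to \Pic_2(E)$ is a morphism, and by construction its fiber over $M\in\Pic_2(E)$ is exactly $p^{-1}(a^{-1}(M))=S_M$. On the other hand, the first lemma gives an isomorphism $\psi:X\setminus\phi^{-1}(E)\xrightarrow{\ \sim\ }\Sec^2(E)\setminus E$, namely the restriction of $\phi$ (here $\phi^{-1}(\Sec^2(E)\setminus E)=X\setminus\phi^{-1}(E)$ since $E\sub\Sec^2(E)$). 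I would then define the desired morphism as the composite
$$\Sec^2(E)\setminus E\xrightarrow{\ \psi^{-1}\ }X\setminus\phi^{-1}(E)\xrightarrow{\ a\circ p\ }\Pic_2(E),$$
which is automatically a morphism, being a composite of morphisms; no genuine descent argument is needed precisely because $\psi$ is an isomorphism. Its surjectivity onto $\Pic_2(E)$ is inherited from that of $a\circ p$.

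It then remains to identify the fibers. The fiber over $M$ equals $\psi\bigl(S_M\setminus\phi^{-1}(E)\bigr)$. The key geometric observation is that $E\sub\phi(S_M)$: for any $q\in E$ there is a unique $q'\in E$ with $\OO(q+q')\simeq M$, and $q$ lies on the corresponding secant line $\ov{qq'}$ (or on the tangent line at $q$ when $q'=q$), which is a ruling of the scroll $\phi(S_M)$. Hence $\phi(S_M)\cap E=E$. Since $\phi|_{S_M}$ is an embedding by the second lemma, the closed locus $S_M\cap\phi^{-1}(E)$ maps isomorphically onto $E$, and therefore $\psi\bigl(S_M\setminus\phi^{-1}(E)\bigr)=\phi(S_M)\setminus E$, as required. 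The identification $\Pic_2(E)\simeq E$ and the list of scroll types $S(r-1,r)$ for $n=2r$, resp.\ $S(r,r)$ or $S(r-1,r+1)$ for $n=2r+1$, are then read off directly from the second lemma.

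Since the two preceding lemmas do all the geometric heavy lifting, I do not anticipate a serious obstacle; the argument is essentially bookkeeping. The one point requiring care is the fiber identification, i.e.\ verifying that deleting $\phi^{-1}(E)$ from $S_M$ on the source corresponds exactly to deleting $E$ from the scroll $\phi(S_M)$ on the target. This is where the embedding statement of the second lemma is essential: it lets one compute $\phi(S_M)\cap E$ inside the smooth scroll $\phi(S_M)$ rather than on the singular locus of $\Sec^2(E)$, and combined with the containment $E\sub\phi(S_M)$ established above it gives the clean equality of fibers. I would state the result set-theoretically on fibers, which is all the corollary asserts.
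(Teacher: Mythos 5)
Your argument is correct and is exactly the intended one: the paper states this corollary without proof as an immediate consequence of the two preceding lemmas, and your construction (transporting $a\circ p$ through the isomorphism of the first lemma and reading off the fibers and scroll types from the second) is precisely the bookkeeping being left implicit. The only remark worth making is that the set-theoretic identity $\phi\bigl(S_M\setminus\phi^{-1}(E)\bigr)=\phi(S_M)\setminus E$ holds automatically for any map, so the embedding statement is not actually needed for the fiber identification, though your observation that $E\sub\phi(S_M)$ is a correct and clarifying aside.
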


\subsection{Rank $2$ locus}

Fix a normal elliptic curve $E\sub\P^n$, where $n\ge 4$, and let $\Pi_E$ be the corresponding FO-bracket on $\P^n$.
We want to describe the locus of points in $\P^n$ where the rank of $\Pi_E$ is $\le 2$.

Let $L:=\OO(1)|_E$, so we can identify the embedding with $E\to \P H^0(L)^*\simeq \P\Ext^1(L,\OO)$.
We will use the following well known fact (see \cite[Prop.\ 2.3]{HP-bih}): let $x\in \Ext^1(L,\OO)\setminus \{0\}$ be the class of an extension \eqref{Ext-VL-eq}.
Then the rank of $(\Pi_E)_x$ is equal to $\deg(L)-\dim\End(V)$.

In the case $n=5$, i.e., $L:=\OO(1)|_E$ has degree $6$, for each $M\in \Pic_3(E)$ such that $M^2\simeq L$ (there are four such $M$), 
we define a Veronese surface $P_M\sub \P^n$ as follows. We observe that the natural map
$$S^2H^0(E,M)\to H^0(E,M^2)\simeq H^0(E,L)$$
is an isomorphism, and define $P_M$ as the image of the Veronese embedding
$$\P H^0(M)^*\to \P H^0(S^2 H^0(M)^*)\simeq \P H^0(L)^*=\P^n.$$

\begin{prop}\label{rk2-prop} (i) Assume $n=4$ or $n\ge 6$. Then for $x\in \P^n$, one has $\rk (\Pi_E)_x\le 2$ if and only if $x\in \Sec^2(E)$.

\noindent
(ii) Assume $n=5$. Then for for $x\in \P^n$, one has $\rk (\Pi_E)_x\le 2$ if and only if $x\in \Sec^2(E)$ or $x\in P_M\sub \P^5$,
for some $M\in \Pic_3(E)$ such that $M^2\simeq L$.
\end{prop}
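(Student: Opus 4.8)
The plan is to translate the rank condition into the representation-theoretic statement about $\End(V)$ that was recalled just before the proposition, and then to classify which extensions $V$ of $L$ by $\OO$ have large endomorphism algebras. Recall that for $x\in\Ext^1(L,\OO)\setminus\{0\}$ representing an extension \eqref{Ext-VL-eq}, the rank of $(\Pi_E)_x$ equals $\deg(L)-\dim\End(V)$. Since $\deg(L)=n+1$, the condition $\rk(\Pi_E)_x\le 2$ is equivalent to $\dim\End(V)\ge n-1$. So the entire problem reduces to: \emph{for which rank-$2$ bundles $V$ of degree $1$ on $E$, arising as a nonsplit extension of $L$ by $\OO$, does one have $\dim\End(V)\ge n-1$?} I would set this up as the first step, making the dictionary between points $x\in\P^n\setminus E$ and iso-classes of such $V$ explicit.

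\textbf{Classifying $V$ by stability.} The next step is to run through the possibilities for $V$ according to the Harder-Narasimhan/indecomposability type. A rank-$2$ degree-$1$ bundle on an elliptic curve is either stable (then $\End(V)=\k$, so $\dim\End(V)=1$ and the rank is maximal, $n-1\ge 3$ for $n\ge 4$, giving rank $\ge 4$, excluded), or it is decomposable $V\simeq A\oplus B$ with $\deg A+\deg B=1$, or it is a nonsplit self-extension type (only possible in even degree, hence not for $\det V=L$ of odd degree $n+1$ when $n$ is even; more carefully one treats each parity). When $V\simeq A\oplus B$ with $A\not\simeq B$ and $A,B$ line bundles, $\dim\End(V)=2$, forcing $n-1\le 2$, i.e.\ $n\le 3$; so for $n\ge 4$ a decomposition into two \emph{distinct} line bundles is still not enough. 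The large-endomorphism cases that survive are exactly $V\simeq A\oplus B$ where one allows $A$ or $B$ to contribute extra homomorphisms, i.e.\ when $\Hom(A,B)$ or $\Hom(B,A)$ is nonzero in addition to the diagonal; on an elliptic curve $\hom(A,B)=\max(0,\deg(B)-\deg(A))$ for non-isomorphic line bundles of the same slope comparison, so I would compute $\dim\End(A\oplus B)=2+\hom(A,B)+\hom(B,A)$ and find exactly when this reaches $n-1$.

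\textbf{Matching with the geometry.} Once the bundles with $\dim\End(V)\ge n-1$ are pinned down, I would match them to the loci in the statement. A point $x\in\Sec^2(E)\setminus E$ lies on a secant $\ov{pq}$, and the corresponding extension is $V\simeq\OO(p)\oplus L(-p)$-type, i.e.\ a sum of two line bundles of degrees $1$ and $n$ summing to $\det V=L$; one computes its endomorphism algebra and checks $\dim\End(V)=n-1$ precisely on the secant variety, using the scroll fibration of Section \ref{scrolls-sec} to see that the whole of $\Sec^2(E)$ is swept out. For $n=4$ or $n\ge 6$ this secant locus should exhaust all bundles with $\dim\End(V)\ge n-1$, giving part (i). For $n=5$ the exceptional possibility is $V\simeq M\oplus M$ for $M\in\Pic_3(E)$ with $M^2\simeq L$: then $\End(V)=\Mat_2(\k)$ has dimension $4=n-1$, and the locus of extensions with associated bundle $M\oplus M$ is exactly the Veronese surface $P_M$ defined via the isomorphism $S^2H^0(M)\xrightarrow{\sim}H^0(L)$. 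This accounts for the extra component in part (ii), and the four choices of $M$ match the four square roots.

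\textbf{The main obstacle} I expect is the bookkeeping in the degree-parity analysis, specifically showing that no \emph{other} decomposition type sneaks in with $\dim\End(V)\ge n-1$ for general $n$, and — more subtly — verifying that the set-theoretic locus of each bundle type is genuinely the claimed closed subvariety (secant variety or Veronese) rather than merely contained in it. The delicate part is the boundary behavior as $x$ approaches $E$: one must confirm that the family of extensions degenerates so that the associated bundle $V$ jumps correctly, and that $P_M$ meets $\Sec^2(E)$ only along the expected locus. Identifying the $n=5$ case as genuinely exceptional requires checking that $\dim\End(M\oplus M)=4$ coincides with $n-1$ \emph{only} when $n=5$, which is exactly the numerical coincidence $2\cdot\hom(M,M)+2=4$, pinning down why the Veronese phenomenon is special to $\P^5$.
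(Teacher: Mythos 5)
Your proposal follows essentially the same route as the paper's proof: reduce $\rk (\Pi_E)_x\le 2$ to $\dim\End(V)\ge n-1$, classify the possible $V$ by (in)stability and decomposition type, and match the surviving types ($L_1\oplus L_2$ with the smaller degree $\le 2$, plus $M\oplus M$ with $M^2\simeq L$ when $n=5$) to $\Sec^2(E)$ and the Veronese surfaces. One slip to correct before the details go through: $V$ has degree $n+1$ (not $1$), and a point of $\Sec^2(E)\setminus E$ on the secant through a degree-$2$ divisor $D$ yields $V\simeq \OO(D)\oplus L(-D)$ (degrees $2$ and $n-1$, so $\dim\End(V)=n-1$ and the rank is exactly $2$), whereas the splitting $\OO(p)\oplus L(-p)$ of degrees $1$ and $n$ that you attribute to secant points in fact occurs only for $x\in E$, where $\Pi_x=0$.
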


\begin{proof} (i) Let $\Pi=\Pi_E$. By \cite[Prop.\ 2.3]{HP-bih}, we have $\rk \Pi_x\le 2$ if and only if $\dim\End(V)\ge \deg(L)-2$, where $\deg(L)=n+1$.
Assume first that $n\ge 6$. Then we get $\dim \End(V)\ge 5$, so $V$ cannot be semistable. Hence, $V$ is isomorphic to a direct sum of line bundles:
$V=L_1\oplus L_2$, where $\deg(L_1)<\deg(L_2)$. Furthermore, we have 
$$\dim \End(V)=2+\deg(L_2)-\deg(L_1)\ge \deg(L)-2=\deg(L_1)+\deg(L_2)-2,$$
so $\deg(L_1)\le 2$. The composed map $\OO\to V\to L_1$ cannot be zero, so it vanishes on an effective divisor $D$ of degree $\le 2$. Since $V$ is a nontrivial extension,
we have $\deg(D)\ge 1$. If $\deg(D)=1$ then our extension splits under $\OO\to \OO(p)$ for some point, so $x$ lies on the elliptic curve $E\sub\P^n$. If $\deg(D)=2$ then our
extension splits under $\OO\to \OO(D)$, so it lies on the chord of $E$ in $\P^n$ corresponding to $D$.

Conversely, if $x\in \Sec^2(E)$, then there exists an effective degree $2$ divisor $D$ on $E$ such that the extension associated with $x$ 
splits under $\OO\to \OO(D)$. If $x\in E$ then the extension splits already under $\OO\to \OO(q)$ for some $q\in E$. Then $V\simeq \OO(q)\oplus L(-q)$,
so $\dim \End(V)=\deg(L)$, which means that $\Pi_x=0$. Otherwise, if $x\in \Sec^2(E)\setminus E$, then we get an exact sequence 
$$0\to L(-D)\to V\to \OO(D)\to 0.$$
Since $\deg(L)\ge 5$, we have $\deg(L(-D))=3>2=\deg(\OO(D))$, so $V\simeq \OO(D)\oplus L(D)$. Hence, $\deg \End(V)=\deg(L)-2$ and $\rk \Pi_x=2$.

Assume now that $n=4$. Then the extensions $V$ corresponding points where $\rk \Pi_x\le 2$ satisfy $\dim \End(V)\ge 3$. Since $\deg(V)=5$, this means that $V$ is unstable,
so $V=L_1\oplus L_2$ where $(\deg(L_1),\deg(L_2))$ is either $(1,4)$ or $(2,3)$. We finish the proof in the same way as before.

\noindent
(ii) Now the condition on $V$ (which has degree $6$) is that $\dim \End(V)\ge 4$. This means that either $V$ is unstable, or $V=M\oplus M$, for some line bundle $M$ of degree $3$,
such that $M^2\simeq L$. In the former case, we use the same argument as in part (i) to show that the correspond point lies on $\Sec^2(C)$. We claim that extensions
with $V\simeq M\oplus M$ correspond to points of $P_M\setminus E$, for the Veronese surface $P_M\sub \P H^0(L)^*$. Indeed, an extension of the form
$$0\to \OO\to M\oplus M\rTo{p} L\to 0$$
corresponds to a base point free pencil $\lan s_1,s_2\ran \sub H^0(M)$, which corresponds to a point $\ell:=\lan s_1,s_2\ran^\perp\sub H^0(M)^*$ of the projective plane $\P H^0(M)^*$.
The line in $\P H^0(L)^*$ corresponding to our extension is given by the hyperplane $H\sub H^0(L)$, obtained as the image of the map $H^0(M)^2\to H^0(L)$ induced by $p:M\oplus M\to L$. In other words, 
$$H=s_1\cdot H^0(M)+s_2\cdot H^0(M)\sub H^0(L).$$
It remains to observe that $H$ is exactly the space of quadrics on $\P H^0(M)^*$ vanishing at the point $\ell$. Hence, it is exactly the image of $\ell$ under the Veronese embedding
$\P H^0(M)^*\to \P S^2H^0(M)^*\simeq \P H^0(L)^*$.
\end{proof}

\begin{lem}\label{tangent-map-lem}
Assume $n\ge 4$, and let $x\in \Sec^2(E)\setminus E$. Let $D\sub E$ be the corresponding effective divisor of degree $2$. Then
we can view $x$ as an element of the projective line $\P H^0(L^{-1}(D)|_D)\sub \P H^1(L^{-1})$, and its lifting $\wt{x}\in H^0(L^{-1}(D)|_D)$
gives a trivialization of $L^{-1}(D)|_D$. The tangent space to $\Sec^2(E)$ at $x$ can be identified with $H^0(L^{-1}(2D)|_{2D})/\lan \wt{x}\ran$, so that the tangent map 
to the morphism $\Sec^2(E)\setminus E\to \Pic_2(E)$ is given by the composition
$$H^0(L^{-1}(2D)|_{2D})/\lan \wt{x}\ran\to H^0(L^{-1}(2D)|_D)\rTo{\wt{x}^{-1}} H^0(\OO(D)|_D)\to H^1(\OO).$$
\end{lem}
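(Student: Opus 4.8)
The plan is to carry out the entire computation inside $T_x\P^n=H^1(L^{-1})/\lan\hat x\ran$, where $\hat x\in H^1(L^{-1})$ is a lift of $x$, using connecting homomorphisms of the skyscraper-quotient sequences on $E$. First I collect the preliminaries. Since $\deg L^{-1}(kD)=-(n+1)+2k<0$ for $k\le 2$ (as $n\ge 4$), all of $H^0(L^{-1})$, $H^0(L^{-1}(D))$ and $H^0(L^{-1}(2D))$ vanish. Hence the connecting maps
$$\delta_D\colon H^0(L^{-1}(D)|_D)\hra H^1(L^{-1}),\qquad \delta_{2D}\colon H^0(L^{-1}(2D)|_{2D})\hra H^1(L^{-1})$$
of the sequences $0\to L^{-1}\to L^{-1}(kD)\to L^{-1}(kD)|_{kD}\to 0$ are injective; as recalled in the statement, $\im\delta_D$ is the secant line through $D$ and $\hat x=\delta_D(\wt x)$. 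The hypothesis $x\notin E$ says that $\wt x$ vanishes nowhere on $D$, so it trivializes $L^{-1}(D)|_D$ and yields an isomorphism $\wt x^{-1}\colon L^{-1}(2D)|_D\xrightarrow{\ \sim\ }\OO(D)|_D$. Finally I will use the sequence on the length-$4$ scheme $2D$ and its subscheme $D$, obtained from $0\to\OO(-D)|_D\to\OO_{2D}\to\OO_D\to 0$ after twisting by $L^{-1}(2D)$:
$$0\to L^{-1}(D)|_D\to L^{-1}(2D)|_{2D}\to L^{-1}(2D)|_D\to 0. \eqno(\ast)$$

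I next identify the tangent space. By naturality of connecting maps for $L^{-1}(D)\hra L^{-1}(2D)$, the map $\delta_{2D}$ restricts to $\delta_D$ on the subspace $H^0(L^{-1}(D)|_D)$ of $(\ast)$; in particular $\delta_{2D}(\wt x)=\hat x$, so $\delta_{2D}$ descends to an injection
$$\ov\delta_{2D}\colon H^0(L^{-1}(2D)|_{2D})/\lan\wt x\ran\hra H^1(L^{-1})/\lan\hat x\ran=T_x\P^n,$$
whose source has dimension $3=\dim\Sec^2(E)$. Since $\Sec^2(E)$ is smooth at $x$ (by the isomorphism $X\xrightarrow{\ \sim\ }\Sec^2(E)\setminus E$ of the earlier Lemma), to conclude $\im\ov\delta_{2D}=T_x\Sec^2(E)$ it suffices to check that every $\k[\epsilon]$-arc in $\Sec^2(E)$ through $x$ has derivative in $\im\ov\delta_{2D}$. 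Such an arc is a flat family $D_\epsilon\sub E\times\Spec\k[\epsilon]$ deforming $D$, together with a lift $\wt x_\epsilon\in H^0(L^{-1}(D_\epsilon)|_{D_\epsilon})$ of $\wt x$. Writing $D=\{f=0\}$ and $D_\epsilon=\{f+\epsilon g=0\}$ locally gives $f^2\in I_{D_\epsilon}$, so the total space of $D_\epsilon$ lies in $2D\times\Spec\k[\epsilon]$; by naturality of connecting maps the section $\wt x_\epsilon$ pushes to a section $\xi_\epsilon=\wt x+\epsilon\xi_1$ of $L^{-1}(2D)|_{2D}$ over $\k[\epsilon]$ with $\delta_{2D}(\xi_\epsilon)$ a lift of $x_\epsilon$. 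Hence $\frac{d}{d\epsilon}x_\epsilon=\ov\delta_{2D}(\xi_1\bmod\wt x)$, which proves the asserted identification of the tangent space.

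It remains to compute the differential of $\Sec^2(E)\setminus E\to\Pic_2(E)$, $x\mapsto\OO(D)$. This morphism factors as $\Sec^2(E)\setminus E\xrightarrow{\ \sim\ }X\xrightarrow{\ p\ }E^{[2]}\xrightarrow{\ a\ }\Pic_2(E)$, so its differential is $da\circ dp$. Under the identification above, $dp\colon T_x\Sec^2(E)\to T_DE^{[2]}=H^0(N_{D/E})=H^0(\OO(D)|_D)$ is the quotient map of $(\ast)$ on $H^0$ (restriction from $2D$ to $D$), descended $\bmod\,\wt x$ and followed by $\wt x^{-1}$: its kernel is exactly $H^0(L^{-1}(D)|_D)/\lan\wt x\ran$, which is the fiber (vertical) direction of $p$ leaving $D$ fixed, and on the quotient it records the first-order motion of $D$ untwisted by $\wt x$. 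Finally, $da\colon H^0(\OO(D)|_D)\to T_{\OO(D)}\Pic_2(E)=H^1(\OO)$ is, by the standard description of the Abel--Jacobi differential, the connecting homomorphism of $0\to\OO\to\OO(D)\to\OO(D)|_D\to 0$. Composing these three maps gives precisely the stated formula.

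The main obstacle is the description of $dp$ in the last paragraph: one must show that, under the tangent-space identification, restriction of the lifted section $\xi_\epsilon$ to the reduced scheme $D$ (untwisted by $\wt x$) is exactly the normal/Kodaira--Spencer class of the deformation $D_\epsilon$ in $H^0(N_{D/E})=H^0(\OO(D)|_D)$. This is handled by the local dual-number computation begun above: with $D_\epsilon=\{f+\epsilon g=0\}$ one has $\xi_\epsilon|_D=\epsilon\,\xi_1|_D$ (since $\wt x$ lies in the kernel of restriction in $(\ast)$), and one checks that $\wt x^{-1}(\xi_1|_D)$ equals the class of $g$ in $\OO(D)|_D\cong N_{D/E}$. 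Everything else is formal manipulation of $(\ast)$ together with the two standard connecting-map identifications.
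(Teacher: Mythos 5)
Your argument is correct, and it is precisely the ``standard calculation related to Terracini's Lemma'' that the paper delegates to \cite[Sec.\ 1]{Bertram}: you identify the embedded tangent space through the connecting homomorphism of $0\to L^{-1}\to L^{-1}(2D)\to L^{-1}(2D)|_{2D}\to 0$ and then track a dual-number arc $(D_\epsilon,\wt{x}_\epsilon)$ through the factorization $X\to E^{[2]}\to\Pic_2(E)$, using that $\phi$ is an isomorphism over $\Sec^2(E)\setminus E$ so every arc lifts. The only detail to watch is the sign in the last step (with your local conventions $\wt{x}^{-1}(\xi_1|_D)$ comes out as the class of $-g$ rather than $g$ in $\OO(D)|_D\simeq N_{D/E}$), which is immaterial both for the statement as formulated and for its use in Lemma \ref{image-Pi-Sec-lem}, where only the kernel of the composition matters.
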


\begin{proof}
This is a standard calculation related to Terracini Lemma (see \cite[Sec.\ 1]{Bertram}).
\end{proof}

Recall that for $n\ge 4$, the secant variety $\Sec^2(E)$ has a fibration in scrolls $S_M$ over $E$, described in Sec.\ \ref{scrolls-sec}.

\begin{lem}\label{image-Pi-Sec-lem} (i) Assume $n\ge 4$. For $x\in \Sec^2(E)\setminus E$, one has 
$$\im((\Pi_E)_x:T^*_x\P^n\to T_x\P^n)=T_xS_M,$$
where $S_M$ is the scroll associated with $M\in\Pic_2(E)$, passing through $x$.

\noindent
(ii) Assume $n=3$. Let $\Si_E\sub \P^3$ denote the set of four vertices of singular quadrics passing through $E$.
For $x\in \P^3\setminus (E\cup \Si_E)$, one has 
$$\im((\Pi_E)_x:T^*_x\P^n\to T_x\P^n)=T_xQ,$$ where $Q$ is the unique smooth quadric
passing through $E$ and $x$.
\end{lem}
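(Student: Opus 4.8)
The plan is to handle both parts by the same principle: the image of the Poisson bivector at a point is the tangent space to its symplectic leaf, and the task is to identify that leaf with the scroll $S_M$ (resp.\ the quadric $Q$). Throughout I use the elementary fact that, since $\Pi_x$ is alternating, the associated map $\Pi_x:T^*_x\P^n\to T_x\P^n$ satisfies $\im\Pi_x=(\ker\Pi_x)^\circ$, the annihilator of $\ker\Pi_x$; in particular $\dim\im\Pi_x=\rk\Pi_x$, and $\im\Pi_x$ is completely determined by $\ker\Pi_x$.

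For part (i), first I would record dimensions. By Proposition \ref{rk2-prop} and the computation in its proof, for $x\in\Sec^2(E)\setminus E$ the extension is $V\cong\OO(D)\oplus L(-D)$ with $D$ the associated degree-$2$ divisor, so $\rk\Pi_x=2$; hence $\dim\im\Pi_x=2=\dim T_xS_M$, the latter because $S_M$ is a smooth surface (Section \ref{scrolls-sec}). By Proposition \ref{conormal-Lie-prop} the kernel $\ker\Pi_x$ is the image of $\End(V)/\langle\id\rangle$ under $A\mapsto p\circ A\circ i$ inside $T^*_x\P^n\cong\langle x\rangle^\perp\subset H^0(L)$, of dimension $n-2$. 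Since $(n-2)+2=n$, it suffices to prove the single inclusion $\ker\Pi_x\subseteq N^*_xS_M=(T_xS_M)^\circ$; equality of dimensions then forces $\ker\Pi_x=N^*_xS_M$, whence $\im\Pi_x=(\ker\Pi_x)^\circ=T_xS_M$.

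The inclusion $\ker\Pi_x\subseteq(T_xS_M)^\circ$ is the vanishing of the natural pairing between $\ker\Pi_x$ and $T_xS_M$, and here I would combine the two explicit models at hand. On one side, $T_xS_M=\ker(d\mu_x)$, where $\mu$ is the morphism $\Sec^2(E)\setminus E\to\Pic_2(E)$ and $d\mu_x$ is computed in Lemma \ref{tangent-map-lem} as a composition out of $H^0(L^{-1}(2D)|_{2D})$, so its elements are supported on $2D$. On the other side, $\ker\Pi_x$ consists of the sections $p\circ A\circ i\in H^0(L)$. The pairing $T^*_x\P^n\otimes T_x\P^n\to\k$ is the Serre-duality pairing $H^0(L)\otimes H^1(L^{-1})\to H^1(\OO)\cong\k$, which localizes along $2D$; the content is that a section of the form $p\circ A\circ i$, cup-multiplied on $2D$ with a tangent vector in $\ker d\mu_x$, yields a coboundary. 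Conceptually this is just the assertion that $V\cong M\oplus L\otimes M^{-1}$ is locally constant along $S_M$, i.e.\ that the map $f:\P^n\to\Bun_L$ of Remark \ref{rmk1} is constant on the leaf and $S_M$ is an orbit of the groupoid $\P^n\times_{\Bun_L}\P^n$; but since the symplectic structure on that groupoid is only sketched there, I would carry out the cohomological pairing directly. I expect this reconciliation of the $H^0(L)$-model of $\ker\Pi_x$ with the $H^0(L^{-1}(2D)|_{2D})$-model of $T_xS_M$ through Serre duality to be the main obstacle; once the pairing is shown to vanish, the dimension count above closes the argument.

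For part (ii) the situation is easier because for $n=3$ the bracket $\Pi_E$ equals, up to a scalar, the bivector $\Pi_{Q_1=Q_2=0}$ attached to a defining pair of quadrics, described before Theorem D by contracting the volume form $\vol\in\det V^*$ against $Q_1,Q_2$, so that $\Pi_x(df_1,df_2)=\vol(dQ_1,dQ_2,df_1,df_2)$. From the skew-symmetry of $\vol$ it is immediate that $Q_1,Q_2$ are Casimir functions, since $\Pi_x(dQ_i,\cdot)$ carries a repeated argument $dQ_i$; hence the rational map $[Q_1:Q_2]:\P^3\dashrightarrow\P^1$ is Casimir and its fibres, the quadrics of the pencil through $E$, contain the symplectic leaves, giving $\im\Pi_x\subseteq T_xQ$ for the pencil-member $Q$ through $x$. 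Finally, $\Pi_E$ has rank $2$ at $x$ exactly when $x\notin E\cup\Si_E$: the lifted bivector on $V$ is the $\vol$-contraction of $dQ_1\we dQ_2$, which is nonzero iff $dQ_1(x),dQ_2(x)$ are independent (failing only on $\Si_E$), and it survives the projection to $T_x\P^3$ iff the Euler direction is not contained in the plane $\ker dQ_1(x)\cap\ker dQ_2(x)$, i.e.\ iff $Q_1(x),Q_2(x)$ are not both zero (failing only on $E$). Since $x$ is a smooth point of $Q$ precisely when $x\notin\Si_E$, we have $\dim T_xQ=2=\rk\Pi_x$, and the inclusion combined with this equality of dimensions gives $\im\Pi_x=T_xQ$.
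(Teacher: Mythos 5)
Part (ii) is fine: it is essentially the paper's argument (the quadrics through $E$ are Poisson subvarieties, hence $\im\Pi_x\subseteq T_xQ$) supplemented with a correct rank count, and I have no objection there. In part (i), however, you have set up exactly the paper's proof --- $\im\Pi_x=(\ker\Pi_x)^\perp$, the description of $\ker\Pi_x$ as the image of $\End(V)/\langle\id\rangle$ under $A\mapsto p\circ A\circ i$ from the proof of Proposition \ref{conormal-Lie-prop}, the description of $T_xS_M$ as the kernel of the tangent map of Lemma \ref{tangent-map-lem}, and a closing dimension count --- but you stop precisely at the step that carries all the content. You yourself flag the vanishing of the pairing between $\ker\Pi_x$ and $T_xS_M$ as ``the main obstacle'' and say you ``would carry out the cohomological pairing directly''; since that computation is never performed, what you have is a correct plan rather than a proof.

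To close the gap one needs the following, which is what the paper supplies. Writing $V\simeq\OO(D)\oplus L(-D)$ with $i=(1,s)$ for a section $s\in H^0(L(-D))$ not vanishing on $D$, one computes that the image of $\End(V)/\langle\id\rangle$ in $H^0(L)$ is spanned by $s$ (coming from the idempotent) together with $H^0(L(-2D))$ (coming from the nilpotents $\OO(D)\to L(-D)$); hence $\ker\Pi_x$ is the full preimage of the line spanned by $s|_{2D}$ under the restriction $H^0(L)\to H^0(L|_{2D})$, and dually $\im\Pi_x$ is the image of the annihilator of that line in $H^0(L^{-1}(2D)|_{2D})$ under the natural map to $H^1(L^{-1})/\langle x\rangle$. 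The reconciliation with Lemma \ref{tangent-map-lem} then hinges on identifying the distinguished lift $\wt{x}$ with $(s|_D)^{-1}\in H^0(L^{-1}(D)|_D)$, which the paper checks by exhibiting a morphism from the extension \eqref{Ext-VL-eq} to $0\to\OO\to\OO(D)\to\OO_D(D)\to 0$; with that identification the tangent map of Lemma \ref{tangent-map-lem} becomes the pairing with $s|_D$, which visibly annihilates the subspace representing $\im\Pi_x$, and your dimension count finishes. Without the identification $\wt{x}=(s|_D)^{-1}$ the two models of the tangent space are never actually matched, so this is a genuine computation and not mere bookkeeping; it is the missing core of your part (i).
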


\begin{proof}
(i) Set $\Pi=\Pi_E$. We have $\im(\Pi_x)=\ker(\Pi_x)^\perp$, and as we have seen in the proof of Proposition \ref{conormal-Lie-prop}, $\ker(\Pi_x)$ is precisely the image of the map
\begin{equation}\label{EndV-pAi-emb}
\End(V)/\lan\id\ran\to \lan x\ran^\perp\sub H^0(L): A\mapsto p\circ A\circ i,
\end{equation}
where $x$ corresponds to an extension \eqref{Ext-VL-eq}.  

As we have seen in the proof of Proposition \ref{rk2-prop}, $x\in \Sec^2(E)\setminus E$ if and only if $V\simeq \OO(D)\oplus L(-D)$, where $D$ is an effective divisor of degree $2$ and the
embedding $i:\OO\to \OO(D)\oplus L(-D)$ is given by $(1,s)$, for some section $s\in H^0(L(-D))$ not vanishing on $D$. The projection
$p:\OO(D)\oplus L(-D)\to L$ is given by $(s,-1)$. 
Thus, $\End(V)/\lan\id\ran$ is generated by the idempotent $e$ corresponding to the factor $\OO(D)$, and by nilpotent endomorphisms induced by maps $t:\OO(D)\to L(-D)$.
The image of $e$ under the embedding \eqref{EndV-pAi-emb} is $s\in H^0(L(-D))\sub H^0(L)$, while the image of $t\in H^0(L(-2D))$ is $-t\in H^0(L(-2D))\sub H^0(L)$.
In other words, $\ker(\Pi_x)$ is identified with the preimage of the line $L$ spanned by $s|_{D}\in H^0(L(-D)|_D)\sub H^0(L|_{2D})$ under the projection $H^0(L)\to H^0(L|_{2D})$.
It follows that $\im(\Pi_x)=\ker(\Pi_x)^\perp$ is the image of $L^\perp\in H^0(L^{-1}(2D)|_{2D})$ under the natural homomorphism $H^0(L^{-1}(2D)|_{2D})\to H^1(L^{-1})/\lan x\ran$.

Note that the restriction $s|_D$ gives a trivialization of $L(-D)|_D$. We claim that its inverse $(s|_D)^{-1}\in H^0(L^{-1}(D)|_D)$ induces the class of our extension under the connecting homomorphism
$H^0(L^{-1}(D)|_D)\to H^1(L^{-1})$. 
Indeed, this is equivalent to checking that our extension \eqref{Ext-VL-eq} is obtained as the pullback of the standard extension
$$0\to \OO\to \OO(D)\to \OO_D(D)\to 0$$
under the map $(s|_D)^{-1}:L\to \OO_D(D)$.
But this immediately follows from the existence of the following morphism of exact sequences
\begin{diagram}
0&\rTo{}& \OO &\rTo{}& V &\rTo{}& L&\rTo{}& 0\\
&&\dTo{\id}&&\dTo{p_{\OO(D)}}&&\dTo{(s|_D)^{-1}}\\
0&\rTo{}& \OO &\rTo{}& \OO(D) &\rTo{}& \OO_D(D) &\rTo{}& 0
\end{diagram}
where the middle arrow is the projection onto the summand $\OO(D)\sub V$.

Hence, by Lemma \ref{tangent-map-lem}, the tangent map to $\Sec^2(E)\setminus E\to \Pic_2(E)$ at $x$ can be identified with the composition
$$H^0(L^{-1}(2D)|_{2D})/\lan \wt{x}\ran\to H^0(L^{-1}(2D)|_D)\rTo{s|_D} H^0(\OO(D)|_D)\to H^1(\OO).$$
But this composition is precisely the pairing with $s|_D\in H^0(L|_{2D})$, so $L^\perp$ goes to zero. 

\noindent
(ii) This immediately follows from the well known fact that the quadrics passing through $E$ are Poisson subvarieties of $\P^3$.
\end{proof}

\begin{rmk}
Lemma \ref{image-Pi-Sec-lem} also follows easily from the fact that the symplectic leaves of the FO brackets correspond to fixing the isomorphism class of an extension, as mentioned
in Remark \ref{rmk1}.
\end{rmk}





\section{Proofs of the main results}

\subsection{Proof of Theorem A}

Consider first the case of two brackets. Assume $[\Pi_{E_1},\Pi_{E_2}]=0$.  By Proposition \ref{bracket-E-prop}, at every point
$p\in E_1\setminus (E_2\cup\Si_{E_2})$ (where $\Si_{E_2}$ is the set of four vertices of singular quadrics through $E_2$), one has $(\Pi_{E_2})_p=v_1\we v_2$, where $v_1\in T_pE_1$. 
Note also that the subspace $\lan v_1,v_2\ran\sub T_p\P^3$ is exactly the image of $(\Pi_{E_2})_p:T_p^*\P^3\to T_p\P^3$, which is contained
in $T_pQ$ where $Q$ is the unique quadric passing through $p$ and $E_2$ (see Lemma \ref{image-Pi-Sec-lem}(ii)). 
Hence, we get an inclusion $T_pE_1\subset T_pQ$.
It follows that the restriction to $E_1\setminus E_2$ of the map
$\P^3\setminus E_2\to \P^1$ given by quadrics through $E_2$, has zero tangent map at every point. Hence, this map is constant,
 which means that $E_1$ is contained in a single quadric through $E_2$.


\medskip


For the case of a general collection $(E_i)$, let $(L_i=L(E_i))$ denote the corresponding collection of projective lines in the projective space of all quadrics (where
$L(E_i)$ is the pencil of quadrics passing through $E_i$). Then $(\Pi_{E_i})$ are compatible if and only if every pair of lines $L_i$ and $L_j$ has nontrivial intersection.
It is well known that this happens if and only if either all lines pass through one point, or all are contained in a projective plane.

Conversely, the fact that the FO brackets associated with anticanonical divisors on a quadric in $\P^3$ are compatible follows from the results of \cite{HP-bih} 
(see \cite[Ex.\ 4.8]{HP-bih}).
\qed

\subsection{Proof of Theorem B}

Consider first the case when we have two normal elliptic curves $E_1,E_2\sub \P^n$ such that $[\Pi_{E_1},\Pi_{E_2}]=0$.  By Proposition \ref{bracket-E-prop}, at every point
$p\in E_1$, one has $(\Pi_{E_2})_p=v_1\we v_2$, where $v_1\in T_pE_1$. In particular, $\rk (\Pi_{E_2})_x\le 2$, so by Proposition \ref{rk2-prop},
we have an embedding $E_1\sub \Sec^2(E_2)$, or (in the case $n=5$), $E_1\sub P_M$, where $P_M\sub \P^5$ is the Veronese surface associated with some square
root $M$ of $L$. In the latter case we are done, so we can assume that $E_1\sub \Sec^2(E_2)$.
Using Lemma \ref{image-Pi-Sec-lem}(i), for $p\in E_1\setminus E_2$, we obtain the inclusion $T_pE_1\sub T_pS_M$, where $S_M\sub \Sec^2(E_2)$ is some scroll, which is the
(closure of the) fiber of the map $\Sec^2(E_2)\setminus E_2\to E_2$. Thus, the restriction of the latter map to $E_1$ is constant, so $E_1$ is contained in some $S_M$,
as claimed.

Next, consider any family $(E_i)$ such that $(\Pi_{E_i})$ are compatible.
Consider a pair $E_1\neq E_2$ in this family, and let $E_i$ be any other elliptic curve in the family. 
By the first part of the proof, there exists a rational surface $S\sub \P^n$ (either a scroll or a Veronese surface), such that $E_1$ and $E_2$ are both anticanonical divisors on $S$.
Note that the linear combinations of $\Pi_{E_1}+\la \Pi_{E_2}$ are the FO brackets associated with elliptic curves $E_\la$ in the pencil of anticanonical divisors on $S$,
generated by $E_1$ and $E_2$. Hence, for each $E_\la$ the brackets $\Pi_{E_\la}$ and $\Pi_{E_i}$ are compatible. By the first part of the proof, this implies that
each $E_\la$ is contained in $\Sec^2(E_i)$. But the surface $S$ is the closure of the union $\cup_\la E_\la$, hence we get the inclusion $S\sub \Sec^2(E_i)$.
Since $S$ is rational, the composed map $S\setminus E_i\to \Sec^2(E_i)\to E_i$ is constant. Therefore, $S$ is contained in the closure $S'$ of the fiber of the map 
$\Sec^2(E_i)-E_i\to E_i$, which is one of the scrolls in $\SS_{E_i}$. Since $S'$ is an irreducible surface, we get $S=S'$, as claimed.

Conversely, the fact that the FO brackets associated with anticanonical divisors on a scroll or on a Veronese surface in $\P^5$ are compatible follows from the results of \cite{HP-bih} 
(see \cite[Ex.\ 4.6, 4.8]{HP-bih}).
\qed

\medskip

\begin{proof}[Proof of Corollary C] First, we recall that by \cite[Cor.\ 1.2]{PS}, if $n$ is even then for any FO-bracket $\Pi_E$ on $\P^n$, 
there exists a Zariski open neighborhood of $\Pi_E$ in the variety of Poisson brackets consisting of FO-brackets. Hence, for even $n$, any linear subspace of Poisson brackets containing
$\Pi_E$ is an FO-subspace. Thus, we only need to prove our assertions for FO-subspaces. 
Now part (b) is immediate from Theorem B. Part (a) also follows Theorem B, since if all elliptic curves lying on $S$, which is either or scroll or a Veronese surface,
are contained in another irreducible surface $S'$, then $S=S'$.
\end{proof}

\subsection{Proof of Theorem D}

Let $\P^3=\P V$, where $V$ is a $4$-dimensional vector space. Since normal elliptic curves in $\P^3$ are intersections of pairs of quadrics,
we can view the construction of the FO Poisson bracket as associating with a pair of generic quadratic forms $Q_1,Q_2\in S^2V^*$, a Poisson bivector 
$\Pi_{Q_1=Q_2=0}\in H^0(\P V,{\bigwedge}^2 T)$ defined up to rescaling. 
More precisely, the choice of quadrics $Q_1,Q_2$ and of a volume form on $V$ gives a nonvanishing differential on the elliptic curve $E=(Q_1=0)\cap (Q_2=0)$,
which is used in the construction of $\Pi(Q_1,Q_2)$. We claim that in fact there is a linear $\GL(V)$-equivariant map
$$\Psi:{\bigwedge}^2(S^2V^*)\to \det(V^*)\ot H^0(\P V,{\bigwedge}^2 T)$$
such that for $Q_1$ and $Q_2$ defining an elliptic curve, $\Pi(Q_1,Q_2)$ coincides with $\Psi(Q_1\we Q_2)$ up to a factor.
Indeed, it is well known that the FO brackets associated with the elliptic curve $Q_1=Q_2=0$ are induced by the quadratic Poisson bracket on $S(V^*)$ given by
$$\{\ell_1,\ell_2\}=d\ell_1\we d\ell_2\we dQ_1\we dQ_2/\vol_V^*,$$
where $\vol_V^*$ is a generator of $\det(V^*)$ (see e.g., \cite[Sec.\ 2.1]{Rubtsov}). This gives the required linear map $\Psi$ which should be viewed as a normalized version of the bivectors $\Pi_{Q_1=Q_2=0}$.

Hence, taking the Schouten bracket of two FO brackets corresponds to a linear $\GL(V)$-invariant map
$$\Psi^{(2)}:({\bigwedge}^2(S^2V^*))^{\ot 2}\to \det(V^*)^2\ot H^0(\P V,{\bigwedge}^3 T)\simeq \det(V^*)\ot S^4(V^*).$$
We claim that this map factors through the multiplication map in the exterior algebra of $S^2V^*$,
$$\mu:({\bigwedge}^2(S^2V^*))^{\ot 2} \to {\bigwedge}^4(S^2V^*).$$

The proof will be based on the fact that by Theorem A, $\Psi^{(2)}((Q_1\we Q_2)\ot (Q_3\we Q_4))=0$ whenever the planes $\lan Q_1,Q_2\ran$ and $\lan Q_3,Q_4\ran$ in $S^2(V^*)$ have
nonzero intersection (since the corresponding FO brackets are compatible). Set $W:=S^2(V^*)$.
Namely, we can interpret the dual map $(\Psi^{(2)})^\vee$ as a section of the vector bundle $\det(V^*)\ot S^4(V^*)\ot \OO(1,1)$ on the square of the Grassmannian variety 
$G(2,W)^2$, which vanishes on the subvariety $Z\sub G(2,W)^2$ consisting of pairs of planes $P_1,P_2\sub W$ with $P_1\cap P_2\neq 0$.
It is well known that the subspace 
$$H^0(G(2,W)^2,\II_Z(1,1))\sub H^0(G(2,W)^2,\OO(1,1))$$
is precisely the image of the map 
$$\mu^\vee:{\bigwedge}^4(W)^*\to ({\bigwedge}^2(W)^*)^{\ot 2}$$
dual to the multiplication. Hence, $(\Psi^{(2)})^\vee$ belongs to the subspace $\det(V^*)\ot S^4(V^*)\ot \im(\mu^\vee)$, or equivalently, $\Psi^{(2)}$ factors through $\mu$.

Thus, we get $\Psi^{(2)}=\Phi'\circ \mu$ for some nonzero $\GL(V)$-invariant map
$$\Phi':{\bigwedge}^4(S^2V^*)\to \det(V^*)\ot S^4(V^*).$$
But $\det(V^*)\ot S^4(V^*)$ is an irreducible $\GL(V)$-representation occurring with multiplicity one in ${\bigwedge}^4(S^2V^*)$, Hence, $\Phi'=c\Phi$ for some constant $c$.

Finally, to find the constant $c$, we calculate both sides for 
$$Q_1=x_1^2, \ Q_2=x_1x_2, \ Q_3=x_1x_3, \ Q_4=x_1x_4.$$
We use coordinates $y_i=x_i/x_1$, $i=2,3,4$ on the affine part of $\P^3$.
The Poisson structure associated with $(Q_1,Q_2)$ has the only nonzero bracket between the coordinates
$$\{y_3,y_4\}_1=2$$
(where we trivialize $\det(V^*)$ using $\vol=x_1\we x_2\we x_3\we x_4$), so 
$$\Pi_1=2\partial_{y_3}\we \partial_{y_4}.$$
The Poisson structure associated with $(Q_3,Q_4)$ has 
$$\{y_2,y_3\}=2y_3,\ \{y_2,y_4\}=2y_4, \ \{y_3,y_4\}=0,$$
so
$$\Pi_2=2y_3\cdot \partial_{y_2}\we\partial_{y_3}+2y_4\cdot\partial_{y_2}\we\partial_{y_4}.$$
Therefore, 
$$[\Pi_1,\Pi_2]=8\partial_{y_2}\we\partial_{y_3}\we\partial_{y_4},$$
which corresponds to the quartic $8x_1^4$.
On the other hand the quartic polynomial associated with $Q_1\we Q_2\we Q_3\we Q_4$ sends $e_1$ to 
$2\cdot\vol$. Hence, $c=4$. 
\qed

\end{document}